\newtheorem{theorem}{Theorem}[section]
\newtheorem{lemma}{Lemma}[section]
\newtheorem{proposition}{Proposition}[section]
\newtheorem{corollary}{Corollary}[section]
\newtheorem{definition}{Definition}[section]
\title{\vspace{-0.7cm}Ramsey numbers of cubes versus cliques}
\date{}
\author{
David Conlon \thanks{Mathematical Institute, Oxford OX1 3LB,
United Kingdom. Email: david.conlon@maths.ox.ac.uk. Research
supported by a Royal Society University Research Fellowship.}
\and
Jacob Fox \thanks{Department of Mathematics, MIT, Cambridge,
MA 02139-4307. Email: fox@math.mit.edu. Research supported by
a Packard Fellowship, a Simons Fellowship, an MIT NEC Corp. award and NSF grant DMS-1069197.}
\and
Choongbum Lee \thanks{Department of Mathematics, MIT, Cambridge,
MA 02139-4307. Email: cb\_lee@mit.edu.
Research supported in part by a Samsung Scholarship.}
\and
Benny Sudakov \thanks{Department of Mathematics, ETH, 8092 Zurich, 
Switzerland and Department of Mathematics, UCLA, Los Angeles, CA 90095.
Email: benjamin.sudakov@math.ethz.ch. Research supported in part by SNSF 
grant 200021-149111 and by a USA-Israel BSF grant. } }
\begin{document}
\maketitle

\begin{abstract}
The cube graph $Q_n$ is the skeleton of the $n$-dimensional cube. It
is an $n$-regular graph on $2^n$ vertices.  The Ramsey number
$r(Q_n,K_s)$ is the minimum $N$ such that every graph of order $N$
contains the cube graph $Q_n$ or an independent set of order $s$.
In 1983, Burr and Erd\H{o}s asked whether the simple lower bound
$r(Q_n,K_s) \geq (s-1)(2^n - 1)+1$ is tight for $s$ fixed and $n$
sufficiently large. We make progress on this problem, obtaining the first
upper bound which is within a constant factor of the lower bound.
\end{abstract}

\section{Introduction}

For graphs $G$ and $H$, the {\it Ramsey number} $r(G,H)$ is defined to be the smallest
natural number $N$ such that every red/blue edge-coloring of the complete graph $K_N$ on $N$ vertices contains a red  copy of $G$ or a blue copy of $H$. 

One obvious construction, noted by Chv\'atal and Harary \cite{CH72}, which gives a lower bound for these numbers is to take $\chi(H) - 1$ disjoint red cliques of size $|G|-1$ and to connect every pair of vertices which are in different cliques by a blue edge. If $G$ is connected, the resulting graph contains neither a red copy of $G$ nor a blue copy of $H$, so that $r(G,H) \geq (|G|-1)(\chi(H) - 1) + 1$. Burr \cite{B81} strengthened this bound by noting that if $\sigma(H)$ is the smallest color class in any $\chi(H)$-coloring of the vertices of $H$, we may add a further red clique of size $\sigma(H) - 1$, obtaining 
\[r(G,H) \geq (|G|-1)(\chi(H) - 1) + \sigma(H).\]
Following Burr and Erd\H{o}s \cite{B81, BE83}, we say that a graph $G$ is {\it $H$-good} if the Ramsey number $r(G,H)$ is equal to this bound. If $\mathcal{G}$ is a family of graphs, we say that $\mathcal{G}$ is $H$-good if all sufficiently large graphs in $\mathcal{G}$ are $H$-good. When $H = K_s$, where $\sigma(K_s) = 1$, we simply say that $G$ or $\mathcal{G}$ is {\it $s$-good}.

The classical result on Ramsey goodness, which predates the definition, is the theorem of Chv\'atal \cite{C77} showing that all trees are $s$-good for any $s$. On the other hand, the family of trees is not $H$-good for every graph $H$. For example \cite{BEFRS89}, a construction of $K_{2,2}$-free graphs due to Brown \cite{B66} allows one to show that there is a constant $c < \frac{1}{2}$ such that 
\[r(K_{1,t}, K_{2,2}) \geq t + \sqrt{t} -  t^c\] 
for $t$ sufficiently large. This is clearly larger than $(|K_{1,t}|-1)(\chi(K_{2,2}) - 1) + \sigma(K_{2,2}) = t + 2$. 

In an effort to determine what properties contribute to being Ramsey good, Burr and Erd\H{o}s \cite{B87, BE83} conjectured that if $\Delta$ is fixed then the family of graphs with bounded maximum degree $\Delta$ is $s$-good for any $s$ (and perhaps even $H$-good for all $H$). This conjecture holds for bipartite graphs $H$ \cite{BEFRS85} but is false in general, as shown by Brandt \cite{B96}. He proved that for $\Delta \geq \Delta_0$ almost every $\Delta$-regular graph on a sufficiently large number of vertices is not even $3$-good. His result (and a similar result in \cite{NR09}) actually proves something stronger, namely, that if a graph $G$ has strong expansion properties then it cannot be $3$-good.

On the other hand, it has been shown that if a family of graphs exhibits poor expansion properties then it will tend to be good \cite{ABS12, NR09}. To state the relevant results, we define the {\it bandwidth} of a graph $G$ to be the smallest number $\ell$ for which there exists an ordering $v_1, \dots, v_n$ of the vertices of $G$ such that every edge $v_i v_j$ satisfies $|i - j| \leq \ell$. This parameter is known to be intimately linked to the expansion properties of the graph. In particular, any bounded-degree graph with poor expansion properties will have sublinear bandwidth \cite{BPTW10}.

The first such result, shown by Burr and Erd\H{o}s \cite{BE83}, states that for any fixed $\ell$ the family of connected graphs with bandwidth at most $\ell$ is $s$-good for any $s$. This result was recently extended by Allen, Brightwell and Skokan \cite{ABS12}, who showed that the set of connected graphs with bandwidth at most $\ell$ is $H$-good for every $H$. Their result even allows the bandwidth $\ell$ to grow at a reasonable rate with the size of the graph $G$. If $G$ is known to have bounded maximum degree, their results are particularly strong, saying that for any $\Delta$ and any fixed graph $H$ there exists a constant $c$ such that if $G$ is a graph on $n$ vertices with maximum degree $\Delta$ and bandwidth at most $c n$ then $G$ is $H$-good.

Many of the original problems of Burr and Erd\H{o}s \cite{BE83} have now been resolved \cite{NR09} but one that remains open is to determine whether the family of hypercubes is $s$-good for every $s$. The {\it hypercube} $Q_n$ is the graph on vertex set $\{0,1\}^n$ where two vertices are connected by an edge if and only if they differ in exactly one coordinate. This family of graphs has sublinear bandwidth but does not have bounded degree, so the result of Allen, Brightwell and Skokan does not apply. 

To get a first bound for $r(Q_n, K_3)$, note that a simple greedy embedding implies that any graph with maximum degree $d$ and at least $d n + 2^n$ vertices has a copy of $Q_n$ in its complement. Suppose now that the edges of a complete graph have been $2$-colored in red and blue and there is neither a blue triangle nor a red copy of $Q_n$. Then, since the blue neighborhood of any vertex forms a red clique, the maximum degree in blue is at most $2^n - 1$. Hence, the graph must have at most $(2^n - 1)n + 2^n < 2^n(n+1)$ vertices. We may therefore conclude that $r(Q_n, K_3) \leq 2^n (n+1)$. 

It is not hard to extend this argument to show that for any $s$ there exists a constant $c_s$ such that $r(Q_n, K_s) \leq c_s  2^n n^{s-2}$. This is essentially the best known bound. Here we improve this bound, obtaining the first upper bound which is within a constant factor of the lower bound.

\begin{theorem}
For any natural number $s \geq 3$, there exists a constant $c_s$ such that
\[r(Q_n, K_s) \leq c_s 2^n.\]
\end{theorem}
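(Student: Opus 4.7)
I would proceed by induction on $s$, with the base case $s=2$ trivial since $r(Q_n,K_2) = 2^n$. For the inductive step, assume $r(Q_n,K_{s-1}) \le c_{s-1} 2^n$ and choose $c_s$ to be a sufficiently large multiple of $c_{s-1}$. Given a red/blue coloring of $K_N$ with $N = c_s 2^n$ and no blue $K_s$, the natural first step is to dispose of vertices of high blue degree: if any vertex $v$ has blue degree at least $c_{s-1} 2^n$, then its blue neighborhood contains no blue $K_{s-1}$ (otherwise $v$ would extend it to a blue $K_s$), and so it contains a red $Q_n$ by the inductive hypothesis. We may therefore assume every vertex has blue degree less than $c_{s-1} 2^n$, so the red graph $G$ has minimum degree at least $\beta N$ for a positive constant $\beta = \beta(s)$.

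The remaining task is an embedding lemma: any graph on $N = c 2^n$ vertices with minimum degree at least $\beta N$ contains $Q_n$, where $c$ is allowed to depend on $\beta$ but not on $n$. The greedy embedding is too weak here, since it uses only the maximum degree $n$ of $Q_n$ and forces $c = \Omega(n)$, recovering precisely the previous bound. Instead, I would apply \emph{dependent random choice} to find a substantial set $U \subseteq V(G)$ in which small subsets have large common red-neighborhoods. Since $Q_n$ is bipartite with parts of size $2^{n-1}$ and maximum degree $n$, one could hope to embed one color class of $Q_n$ into $U$ and then find the images of vertices in the other class sequentially inside the common neighborhoods of their already-embedded $Q_n$-neighbors.

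The main obstacle is parameter control: a direct single application of dependent random choice with sample size $t \ge n$ (needed to control the common neighborhoods of $n$-subsets corresponding to neighborhoods in $Q_n$) produces a set $U$ of size only $\beta^t N = \beta^{\Omega(n)} N$, which is exponentially smaller than the $2^{n-1}$ vertices one needs to embed. To circumvent this, I would exploit the recursive product structure $Q_n = Q_k \times Q_{n-k}$ and perform dependent random choice in multiple stages, across $O(\log n)$ geometric scales of $k$. At each stage, one embeds a $Q_k$ factor into a carefully chosen set and then recurses inside the common neighborhood of the embedded $Q_k$ to place the remaining $Q_{n-k}$ factor. The key parameter-balancing step is to show that each stage loses only a constant multiplicative factor in relative density (and in ambient size, only the factor $2^{-k}$ that is intrinsic to passing from $Q_n$ to $Q_{n-k}$), so that the total loss is bounded by a constant depending solely on $\beta$ and $s$. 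Combined with the inductive reduction above, this yields $r(Q_n, K_s) \le c_s 2^n$ for an appropriate constant $c_s$.
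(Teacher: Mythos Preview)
Your inductive reduction to a pure minimum-degree hypothesis is natural, but it discards exactly the structure the paper exploits. Once you pass to ``red minimum degree $\ge \beta N$'' you have thrown away the condition that blue neighbourhoods are blue $K_{s-1}$-free; in the triangle case this is the statement that blue neighbourhoods are \emph{red cliques}, which is what the paper uses to find large red cliques of carefully prescribed sizes (Proposition~\ref{prop:K3_preprocess}) and, more generally, to build the hierarchy $\mathcal{S}_1\supset\cdots\supset\mathcal{S}_{s-2}$ of blue-$K_{s-\ell}$-free sets. The embedding lemma you are left needing---that minimum degree $\beta N$ on $N=c(\beta)2^n$ vertices forces a copy of $Q_n$---is not established by your sketch, and is not obviously easier than the original problem: the greedy argument still loses a factor of $n$, and a single round of dependent random choice, as you note, produces a set exponentially too small.

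Your multi-stage plan does not close this gap. The description ``embed a $Q_k$ factor and then recurse inside the common neighbourhood of the embedded $Q_k$ to place the remaining $Q_{n-k}$ factor'' does not match the product structure $Q_n=Q_k\times Q_{n-k}$: the latter requires $2^k$ separate copies of $Q_{n-k}$, one per vertex of $Q_k$, with matchings between copies indexed by $Q_k$-edges---not a single $Q_{n-k}$ sitting in a common neighbourhood. More seriously, the assertion that ``each stage loses only a constant multiplicative factor in relative density'' is the entire content of the argument and is left unjustified; controlling degrees \emph{between} the $2^k$ pieces (not just inside them) is precisely where the difficulty lies. The paper handles this by an explicit tiling algorithm: it decomposes $[N]$ into nested families of sets, decomposes $Q_n$ into subcubes of matching codimensions, and maintains at every step a quantitative ``proper assignment'' bounding blue densities between adjacent pieces (Propositions~\ref{prop:K3_tiling}, \ref{prop:K4_tiling}, \ref{prop:Ks_tiling}). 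The density bounds are obtained not from a bare minimum-degree hypothesis but from repeated use of the fact that blue neighbourhoods inside a blue-$K_t$-free set are blue-$K_{t-1}$-free, which is exactly the information your reduction discards.
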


The original question of Burr and Erd\H{o}s \cite{BE83} relates to $s$-goodness but it is natural to also ask whether the family of cubes is $H$-good for any $H$. For bipartite $H$, this follows directly from a result of Burr, Erd\H{o}s, Faudree, Rousseau and Schelp \cite{BEFRS85}. Our result clearly implies that for any $H$, there is a constant $c_H$ such that $r(Q_n, H) \leq c_H 2^n$.

For triangles, the rough idea of the proof is to show that if a red/blue edge-coloring of $K_N$ does not contain a blue triangle then it may be tiled with a collection of red cliques which have low blue density between them. A red copy of $Q_n$ may then be found by inserting subcubes into each of the red cliques and patching them together using the fact that the blue density between these different cliques is low. Throughout the argument, it is very important to keep close control on the size and number of the red cliques as the definition of low blue density in a particular bipartite subgraph will depend on the size of the two cliques forming its endpoints.

For $K_4$, the method is an extension of this idea, except the tiling will now consist of cliques which are free of blue triangles, again with low blue density between different cliques. In order to make this useful for embedding cubes, we then have to perform a second level of tiling, splitting each such clique into red subcliques with low density between them, as was already done for triangles. In general, when we consider $K_s$, there will be $s - 2$ levels of tiling to keep track of. This makes the bookkeeping somewhat complex. Accordingly, we have chosen to present the proof in stages, first considering triangles, then $K_4$ and only then the general case. Although this leads to some redundancies, it allows us to introduce the additional concepts needed for each step at a reasonable pace. 

More generally, one may ask which families of graphs are $s$-good for all $s$. A powerful result proved by Nikiforov and Rousseau \cite{NR09} shows that graphs with small separators are $s$-good. They used this result to resolve a number of the original questions of Burr and Erd\H{o}s \cite{BE83} regarding Ramsey goodness. Let the {\it degeneracy} $d(G)$ of a graph $G$ be the smallest natural number $d$ such that every induced subgraph of $G$ has a vertex of degree at most $d$. Furthermore, we say that a graph $G$ has a $(t, \eta)$-separator if there exists a vertex subset $T \subseteq V(G)$ such that $|T| \leq t$ and every connected component of $V(G)\char92T$ has size at most $\eta |V(G)|$. The result of Nikiforov and Rousseau then says that for any $s \geq 3$, $d \geq 1$ and $0 < \gamma < 1$, there exists $\eta > 0$ such that the class $\mathcal{G}$ of $d$-degenerate graphs $G$ with a $(|V(G)|^{1-\gamma}, \eta)$-separator is $s$-good.

We will apply this theorem, together with the Alon-Seymour-Thomas separator theorem for graphs with a forbidden minor \cite{AST90}, to show that for any $s \geq 3$ any family of graphs with a forbidden minor is $s$-good. A graph $H$ is said to be a {\it minor} of $G$ if $H$ can be obtained from a subgraph of $G$ by contracting edges. By an {\it $H$-minor} of $G$, we mean a minor of $G$ which is isomorphic to $H$. For a graph $H$, let $\mathcal{G}_H$ be the family of graphs which do not contain an $H$-minor. 

\begin{theorem} \label{forbidminor}
For every fixed graph $H$, the class $\mathcal{G}_H$ of graphs $G$ which do not contain an $H$-minor is $s$-good for all $s \geq 3$.
\end{theorem}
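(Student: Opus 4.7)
The plan is to verify the two hypotheses of the Nikiforov-Rousseau theorem for $\mathcal{G}_H$: bounded degeneracy, and the existence of a small separator whose removal leaves components of small prescribed fractional size. Writing $h := |V(H)|$, every $H$-minor-free graph is in particular $K_h$-minor-free, so it suffices to prove both properties for the (larger) class of $K_h$-minor-free graphs.

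For degeneracy, I would invoke Mader's theorem (sharpened by Kostochka and Thomason): every $K_h$-minor-free graph has average degree at most some constant $d = d(h)$. Since $K_h$-minor-freeness is closed under taking subgraphs, every such graph is $d$-degenerate.

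For the separator, the Alon-Seymour-Thomas theorem provides an $(O(h^{3/2}\sqrt{n}),\,2/3)$-separator in every $n$-vertex $K_h$-minor-free graph. Nikiforov and Rousseau, however, require the largest component to occupy at most a prescribed small fraction $\eta$ of the vertex set. To reduce this fraction from $2/3$ down to $\eta$, I would iterate: each surviving component of size exceeding $\eta n$ is itself $K_h$-minor-free, so the separator theorem applies inside it, and $O(\log(1/\eta))$ rounds of halving suffice. A straightforward tally of the recursion tree---at level $j$ one must split at most $1/\eta$ components, each of size at most $(2/3)^j n$ and contributing a separator of size $O((2/3)^{j/2}\sqrt{n})$---gives a total separator of size $O_\eta(\sqrt{n})$, which is well within $n^{1-\gamma}$ for any fixed $\gamma < 1/2$ once $n$ is large. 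Taking the union of all separators produced at every level yields a single vertex set whose removal leaves only components of size at most $\eta n$.

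Putting the pieces together: fix $s \geq 3$ and any $\gamma < 1/2$, apply the Nikiforov-Rousseau theorem with parameters $s$, $d(h)$ and $\gamma$ to obtain the threshold $\eta > 0$, and then the iterated separator construction produces a $(|V(G)|^{1-\gamma},\eta)$-separator in every sufficiently large $G \in \mathcal{G}_H$. Their theorem then certifies that $G$ is $s$-good, yielding the result. The only step requiring genuine work beyond citation is the iteration of Alon-Seymour-Thomas, but the bookkeeping above is routine; I do not anticipate any serious obstacle.
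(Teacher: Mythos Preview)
Your proposal is correct and follows essentially the same route as the paper: bounded degeneracy via Kostochka--Thomason, an $(O_\eta(\sqrt{n}),\eta)$-separator obtained by iterating Alon--Seymour--Thomas for $O(\log(1/\eta))$ rounds, and then an application of Nikiforov--Rousseau. The only cosmetic difference is that the paper splits \emph{every} piece at each level (yielding $2^i$ pieces at level $i$ and a separator bound of $2^i h^{3/2}\sqrt{n}$), whereas you observe that only the at most $1/\eta$ pieces exceeding $\eta n$ need to be split; both tallies give $O_\eta(\sqrt{n}) \le n^{1-\gamma}$ for large $n$.
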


In particular, since the family of planar graphs consists exactly of those graphs which do not contain $K_5$ or $K_{3,3}$ as a minor, we have the following corollary.

\begin{corollary}
The family of planar graphs is $s$-good for all $s \geq 3$.
\end{corollary}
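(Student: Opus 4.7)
The plan is to deduce the corollary as an immediate consequence of Theorem~\ref{forbidminor} together with a classical characterization of planarity. The only substantive input we need beyond the statement of Theorem~\ref{forbidminor} is Wagner's theorem (equivalently, the minor-version of Kuratowski's theorem), which asserts that a graph is planar if and only if it contains neither $K_5$ nor $K_{3,3}$ as a minor.

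First, I would invoke Wagner's theorem to note that every planar graph is $K_5$-minor-free; in particular, the family of planar graphs is a subfamily of $\mathcal{G}_{K_5}$. (The introduction already hints at this by recalling that the planar graphs are exactly the graphs avoiding both $K_5$ and $K_{3,3}$ as minors, so we need only the weaker half of that equivalence.) Then, for any fixed $s\geq 3$, I would apply Theorem~\ref{forbidminor} with $H=K_5$ to conclude that $\mathcal{G}_{K_5}$ is $s$-good, meaning that every sufficiently large $K_5$-minor-free graph $G$ satisfies $r(G,K_s)=(|G|-1)(s-1)+1$.

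Finally, I would observe that $s$-goodness in the sense of the paper (all sufficiently large graphs in the family attain the Chv\'atal--Harary--Burr lower bound) is automatically inherited by any subfamily: if the conclusion holds for every sufficiently large graph in $\mathcal{G}_{K_5}$, it holds a fortiori for every sufficiently large planar graph. This yields the corollary for every $s\geq 3$.

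There is essentially no obstacle here beyond recognizing the correct minor-closed superclass; the entire content of the corollary is absorbed into Theorem~\ref{forbidminor}. One minor bookkeeping point worth verifying is that the notion of ``$s$-good family'' in the paper is indeed closed under passing to subfamilies, but this is immediate from the definition since the equality $r(G,K_s)=(|G|-1)(s-1)+1$ is a property of individual graphs $G$ and the ``sufficiently large'' threshold can only decrease when we restrict to a subfamily.
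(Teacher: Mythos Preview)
Your proposal is correct and follows exactly the paper's approach: the paper simply notes that planar graphs are precisely those avoiding $K_5$ and $K_{3,3}$ as minors, so the corollary is immediate from Theorem~\ref{forbidminor} applied with $H=K_5$ (or $K_{3,3}$). Your additional remark about $s$-goodness passing to subfamilies is a reasonable elaboration of what the paper leaves implicit.
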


 A {\it minor-closed} family $\mathcal{G}$ is a collection of graphs which is closed under taking minors. The graph minor theorem of Robertson and Seymour \cite{RS04} states that any minor-closed family of graphs may be characterized by a finite collection of forbidden minors. We say that a minor-closed family is nontrivial if it is not the class consisting of all graphs. Note that the following corollary is an immediate consequence of Theorem \ref{forbidminor} as any nontrivial minor-closed family $\mathcal{G}$ is a subfamily of $\mathcal{G}_H$, where $H$ is a graph not in $\mathcal{G}$. 

\begin{corollary} \label{minorclosed}
Any nontrivial minor-closed family of graphs $\mathcal{G}$ is $s$-good for all $s \geq 3$.
\end{corollary}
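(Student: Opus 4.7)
The plan is to deduce this directly from Theorem \ref{forbidminor}; no further Ramsey-theoretic work is required. Let $\mathcal{G}$ be a nontrivial minor-closed family and pick any graph $H \notin \mathcal{G}$, which exists by the nontriviality hypothesis. First I would argue the inclusion $\mathcal{G} \subseteq \mathcal{G}_H$: if some $G \in \mathcal{G}$ had $H$ as a minor, then $H$ itself would lie in $\mathcal{G}$ since $\mathcal{G}$ is closed under taking minors, contradicting the choice of $H$. Note that although the excerpt motivates this corollary by citing the Robertson--Seymour theorem, that deep result is not actually needed here---a single forbidden graph $H$ already witnesses the inclusion.

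Next I would observe that $s$-goodness is inherited by subfamilies, which is immediate from the definition: saying a family is $s$-good means that every sufficiently large member is $s$-good, a property trivially preserved under passing to any subfamily. Combining the inclusion $\mathcal{G} \subseteq \mathcal{G}_H$ with Theorem \ref{forbidminor} applied to this particular $H$ then shows that every sufficiently large graph in $\mathcal{G}$ is $s$-good for each $s \geq 3$, which is exactly the statement of Corollary \ref{minorclosed}.

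There is essentially no real obstacle at this stage: the substantive work is already packaged inside Theorem \ref{forbidminor}, whose proof combines the Alon--Seymour--Thomas separator theorem for $H$-minor-free graphs with the Nikiforov--Rousseau result that $d$-degenerate graphs with sublinear separators are $s$-good. Once that theorem is in hand, the present corollary is a purely formal two-line deduction.
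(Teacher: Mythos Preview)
Your proposal is correct and matches the paper's argument exactly: the paper states that the corollary is an immediate consequence of Theorem \ref{forbidminor} since any nontrivial minor-closed family $\mathcal{G}$ is a subfamily of $\mathcal{G}_H$ for any $H \notin \mathcal{G}$. Your observation that the Robertson--Seymour theorem is not actually needed here is also apt.
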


We will begin, in Section \ref{sec:K3}, by studying the Ramsey number of cubes versus triangles. We will then show, in Section \ref{sec:K4}, how our arguments extend to $K_4$ before treating the general case in Section \ref{sec:general}. In Section \ref{sec:separator}, we will prove Theorem \ref{forbidminor}. We will conclude with some further remarks.  All logarithms are base $2$ unless otherwise indicated. For the sake of clarity of presentation, we systematically omit floor and ceiling signs whenever they are not crucial. We also do not make any serious attempt to optimize absolute constants in our statements and proofs.

\section{Triangle versus cube} \label{sec:K3}

The argument works for $n \ge 6$. Consider a coloring of the edges of the complete graph $K_N$ on
the vertex set $[N] = \{1,2,\ldots,N\}$ for $N \ge 7000 \cdot 2^n$
with two colors, red and blue, and assume that there are no blue
triangles. We will prove that this coloring contains a red $Q_n$.

\subsection{Preprocessing the coloring} \label{sec:K3_preprocess}

For each $d=0, 1, 2, \ldots, \log n + 3$, we use the following
procedure to construct a family $\mathcal{S}$ of subsets of $[N]$
(note that $\log n + 3 \le n$ for $n \ge 6$):

\begin{quote}
If there exists a set $S$ which induces a red clique of order
exactly $4 \cdot 2^{n-d}$, then arbitrarily choose one, add it to
the family $\mathcal{S}$, and remove the vertices of the clique from
$[N]$. We define the \emph{codimension} $d(S)$ of such a set as $d(S)
= d$. When there are no more such red cliques, continue to the next
value of $d$. In the end, if we have $\sum_{S \in \mathcal{S}} |S| \ge \frac{N}{2}$, then
we let $\mathcal{S}$ be our family.
Otherwise, if $\sum_{S \in \mathcal{S}} |S| < \frac{N}{2}$, we add the set of
remaining vertices to $\mathcal{S}$, and declare it to have
codimension zero (note that this set has size at least $\frac{N}{2} \ge 4 \cdot 2^{n}$).
\end{quote}

In either of the cases, we have $\sum_{S \in \mathcal{S}} |S| \ge \frac{N}{2}$.
Note that we also have the following properties.

\begin{proposition} \label{prop:K3_preprocess}
\begin{enumerate}[(i)]
  \setlength{\itemsep}{1pt} \setlength{\parskip}{0pt}
  \setlength{\parsep}{0pt}
\item For an integer $i \ge 1$, let $X = \bigcup_{S \in \mathcal{S}, d(S) \ge i}S$. Then each vertex
$v \in [N]$ has at most $2^{n-i+3}$ blue neighbors in $X$.
\item For every set $S \in \mathcal{S}$, the subgraph induced by $S$ has maximum blue degree at most $\frac{2^{n - d(S)}}{2n}$.
\end{enumerate}
\end{proposition}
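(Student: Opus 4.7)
The plan is to exploit the two driving facts of the setup: (a) since no blue triangle exists, the blue neighborhood of every vertex in $[N]$ induces a red clique, and (b) the greedy extraction step $d = d_0$ only terminates once no red clique of order exactly $4\cdot 2^{n-d_0}$ remains in the leftover graph — hence also no red clique of larger order, since any such clique would contain a subclique of the exact required size.

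For part (i), I would fix $v \in [N]$, let $B$ denote its set of blue neighbors in $X$, and observe that $B$ induces a red clique by (a). Each vertex of $X$ lies in some $S \in \mathcal{S}$ with $d(S) \geq i$, and such an $S$ was extracted at some step $d = d(S) \geq i$, so the vertices of $X$ were all still present in the graph when step $d = i-1$ terminated. Consequently $B$ is a red clique inside that leftover graph. If $|B| \geq 2^{n-i+3} = 4 \cdot 2^{n-(i-1)}$, then the leftover graph at the end of step $d = i-1$ contains a red clique of order $4\cdot 2^{n-(i-1)}$, contradicting (b) applied to $d_0 = i-1$. Thus $|B| < 2^{n-i+3}$, which gives the claimed bound.

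For part (ii), the family $\mathcal{S}$ contains only two kinds of sets. If $S$ was extracted as a red clique at step $d(S)$, then every edge inside $S$ is red, so the blue degree within $S$ is $0$, trivially at most $2^{n-d(S)}/(2n)$. The only remaining possibility is that $S$ is the leftover set appended at the very end with codimension $0$; in that case $S$ consists of vertices still present after step $d = \log n + 3$ ended, so by (b) no red clique of order $4\cdot 2^{n-(\log n + 3)} = 2^n/(2n)$ sits inside $S$. For any $v \in S$, the blue neighbors of $v$ within $S$ again form a red clique by (a), and this red clique is contained in $S$, so its size is strictly less than $2^n/(2n) = 2^{n-d(S)}/(2n)$.

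There is essentially no serious obstacle here: both parts reduce to the same observation that an over-large blue neighborhood would produce a red clique that the greedy extraction should already have picked up. The only point requiring care is the bookkeeping — specifically, matching the codimension $i$ appearing in the definition of $X$ with the termination condition of step $d = i-1$ in part (i), and noticing in part (ii) that the sole non-red-clique member of $\mathcal{S}$ is the leftover set, whose codimension is exactly $0$ so that the arithmetic $2^{n-d(S)}/(2n) = 2^n/(2n)$ matches the termination threshold at $d = \log n + 3$.
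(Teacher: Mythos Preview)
Your proposal is correct and follows essentially the same approach as the paper: both parts reduce to observing that a blue neighborhood is a red clique (since there is no blue triangle), and that the greedy extraction procedure would already have removed any red clique exceeding the relevant threshold. The only cosmetic difference is that the paper phrases part (i) as ``no subset of $X$ induces a red clique of size at least $2^{n-i+3}$'' rather than tracking the leftover graph at the end of step $i-1$, but this is the same argument.
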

\begin{proof}
(i) Given $i \ge 1$ and a set $X$ defined as above, note that no subset of $X$ induces a
red clique of size at least $4 \cdot 2^{n-i+1} = 2^{n-i+3}$, since such a set would
have been added to $\mathcal{S}$ in the previous round. On the other hand,
since there are no blue triangles, the blue neighborhood
of every vertex induces a red clique. Therefore, for every $v \in [N]$, $v$ has at most
$2^{n-i+3}$ blue neighbors in $X$.

(ii) The claim is trivially true for $d >0$, since every set of
codimension at least 1 is a red clique. For $d=0$, if $S$ is not a
red clique, 
then it is the set of codimension zero added to $\mathcal{S}$
after the sets of codimensions $0$ to $\log n + 3$. 
Thus, there are no subsets of $S$ which induce a red
clique of size at least $4 \cdot 2^{n-(\log n+3)}$, since
such a set would have been added earlier to the family $\mathcal{S}$.
On the other hand, since there are no blue triangles, the blue
neighborhood of every vertex induces a red clique. Therefore, every
vertex of $S$ has at most $4 \cdot 2^{n- (\log n+3)} \le
\frac{2^n}{2n}$ blue neighbors in $S$.
\end{proof}

\subsection{Tiling the cube} \label{sec:K3_tiling}

Our strategy is to first decompose the cube $Q_n$ into smaller cubes
and then to embed it piece by piece into $K_N$, placing one of the subcubes in each of the subsets from $\mathcal{S}$. We represent the subcubes of $Q_n$
by using vectors in $\{0,1,*\}^{n}$. For example, the vector $(0, 1, *, \ldots, *)$
will represent the subcube $\{ (0, 1, x_1, \ldots, x_{n-2}) : (x_1, \ldots, x_{n-2}) \in \{0,1\}^{n-2} \}$.

We seek a \emph{tiling} of $Q_n$,
which we define as a collection of vertex-disjoint cubes that covers
all the vertices of $Q_n$ and uses only cubes that have all their fixed coordinates
at the start of the vector. That is, they are of the form $C = (a_1, a_2, \ldots, a_d, *, *, \ldots, *)$
for some $d \ge 0$ and $a_1, a_2,\ldots, a_d \in \{0,1\}$. We will call such a cube {\it special} and let
$d$ be the \emph{codimension} $d(C)$ of $C$. We say that two disjoint cubes $C$ and $C'$
are \emph{adjacent}, if there are vertices $x \in C$ and $x' \in C'$ which are adjacent in $Q_n$.
Note that the following list of properties holds for such cubes and a tiling composed from them.

\begin{proposition} \label{prop:K3_cuberelation}
\begin{enumerate}[(i)]
  \setlength{\itemsep}{1pt} \setlength{\parskip}{0pt}
  \setlength{\parsep}{0pt}
\item If two special cubes $C$ and $C'$ intersect, then we have $C \subset C'$ or $C' \subset C$.
\item Two disjoint cubes $C = (a_1, \ldots, a_d, *, \ldots, *)$ and $C' = (b_1, \ldots, b_{d'}, *, \ldots, *)$ for $d' \le d$ are adjacent if and only if $(a_1, \ldots, a_{d'})$ and $(b_1, \ldots, b_{d'})$ differ in
exactly one coordinate.
\item For a tiling $\mathcal{C}$ and a special cube $C \in \mathcal{C}$ of codimension $d$, there are at most $d$ other special cubes $C' \in \mathcal{C}$ of codimension $d(C') \le d$ which are adjacent to $C$.
\end{enumerate}
\end{proposition}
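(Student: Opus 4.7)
Here is the approach I would take for Proposition \ref{prop:K3_cuberelation}. All three parts are purely combinatorial facts about coordinate-fixed cubes, and they follow by directly reading off what the vector representation forces.

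For part (i), assume without loss of generality that $d(C) \le d(C')$, and suppose $x \in C \cap C'$. Since $x$ lies in $C = (a_1,\dots,a_{d(C)},*,\dots,*)$ we have $x_i = a_i$ for $i \le d(C)$, and since $x$ lies in $C'$ and $d(C) \le d(C')$ we also have $x_i = b_i$ for $i \le d(C)$. Hence $a_i = b_i$ for every $i \le d(C)$, and the first $d(C)$ coordinates of every point of $C'$ agree with those of $C$. This shows $C' \subset C$, as claimed.

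For part (ii), let $x = (a_1,\dots,a_d,x_{d+1},\dots,x_n) \in C$ and $x' = (b_1,\dots,b_{d'},x'_{d'+1},\dots,x'_n) \in C'$. The coordinates on which these two vertices differ split into three groups: positions $i \le d'$, where the values $a_i$ and $b_i$ are fixed; positions $d' < i \le d$, where $x$ is fixed at $a_i$ but $x'_i$ is free, so we may set $x'_i = a_i$; and positions $i > d$, where both coordinates are free and can be set equal. Therefore the minimum Hamming distance between a vertex of $C$ and a vertex of $C'$ is exactly $k := |\{i \le d' : a_i \ne b_i\}|$, so adjacency is equivalent to $k = 1$, which is exactly the stated condition.

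For part (iii), suppose $C' \in \mathcal{C}$ is a special cube with $d(C') \le d$ that is adjacent to $C$. By part (ii) applied with $d' = d(C')$, the initial segments $(a_1,\dots,a_{d'})$ and $(b_1,\dots,b_{d'})$ differ in a single coordinate $j(C') \in \{1,\dots,d'\} \subseteq \{1,\dots,d\}$. I claim the map $C' \mapsto j(C')$ is injective on the set of such adjacent cubes, which will immediately give the bound $d$. Indeed, if two adjacent cubes $C'_1, C'_2$ both had $j(C'_1) = j(C'_2) = j$, then letting $d'' = \min(d(C'_1), d(C'_2))$, the first $d''$ coordinates of both $C'_1$ and $C'_2$ equal $(a_1,\dots,a_{j-1},1-a_j,a_{j+1},\dots,a_{d''})$. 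By the argument of part (i), this forces one of $C'_1, C'_2$ to contain the other, contradicting the fact that distinct members of the tiling $\mathcal{C}$ are vertex-disjoint. No step here is a serious obstacle; the only subtlety is in (iii), where one must be careful to use disjointness in the tiling (not just in the ambient graph) to rule out multiple adjacent cubes sharing the same flipping coordinate.
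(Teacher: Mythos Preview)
Your proof is correct and follows essentially the same approach as the paper's: the arguments for (i) and (ii) are identical in substance, and for (iii) the paper simply says ``there are only $d$ coordinates that one can `flip' from $C$ to obtain a cube adjacent to $C$,'' while you spell out the injectivity of the map $C' \mapsto j(C')$ explicitly using disjointness of the tiling. Your added care in (iii) is entirely appropriate, since the paper's one-line justification does leave that injectivity implicit.
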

\begin{proof}
(i) Let $d=d(C)$, $d'=d(C')$ and, without loss of generality, suppose that $d' \le d$. Suppose that $(a_1, \ldots, a_n)\in C \cap C'$. Then, since we are only considering special cubes, we must
have $C = (a_1, \ldots, a_d, *, \ldots, *)$ and $C' = (a_1, \ldots, a_{d'}, *,\ldots,*)$.
However, we then have $C \subset C'$.

(ii) If $(a_1, \ldots, a_{d'}) = (b_1, \ldots, b_{d'})$, then we have $C \subset C'$, and thus we may assume that
this is not the case. Cubes $C$ and $C'$ are adjacent if and only if there are two vectors
$\mathbf{v} = (a_1, \ldots, a_d, x_{d+1}, \ldots, x_{n})$ and $\mathbf{w} = (b_1, \ldots, b_{d'}, y_{d'+1}, \ldots, y_{n})$ in
$\{0,1\}^n$ which differ in exactly one coordinate. However, since the two vectors restricted to the
first $d'$ coordinates are already different, this can happen only if $(a_1, \ldots, a_{d'})$
and $(b_1, \ldots, b_{d'})$ differ in exactly one coordinate. Moreover, one can see that if this is
the case, then there is an assignment of values to $x_i$ and $y_j$ so that $\mathbf{v}$ and
$\mathbf{w}$ indeed differ in exactly one coordinate.

(iii) Since $C$ has codimension $d$, there are only $d$ coordinates that one can `flip' from $C$ to obtain a cube adjacent to $C$.
\end{proof}

As we have already mentioned, our tiling $\mathcal{C}$ of the cube $Q_n$ will be constructed in correspondence with the family $\mathcal{S}$ constructed in Section \ref{sec:K3_preprocess}. 
We will construct the tiling $\mathcal{C}$
by finding cubes of the tiling one by one. We slightly abuse
notation and use $\mathcal{C}$ also to denote the `partial' tiling,
where only part of the cube $Q_n$ is covered. At each step, we will find
a subcube $C$ which covers some non-covered part of $Q_n$ and
assign it to some set $S_C \in \mathcal{S}$. We say that such an
assignment is \emph{proper} if the following properties hold.

\begin{quote}
{\bf Proper assignment.}

1. If $C$ has codimension $d$, then $S_C$ has codimension $d$.

2. Suppose that $C$ is adjacent to some cube $C'$ already in the
tiling and that $C'$ is assigned to $S_{C'}$. Then the bipartite
graph induced by $S_C$ and $S_{C'}$ contains at most
$\frac{|S_C||S_{C'}|}{16\delta^2}$ blue edges, where $\delta=
\max\{d(C), d(C')\}$.
\end{quote}

Our algorithm for finding the tiling $\mathcal{C}$ and the
corresponding sets in $\mathcal{S}$ is as follows.

\begin{quote}
\textbf{Tiling Algorithm.} At each step, consider all possible special cubes
$C$ which 
\begin{itemize}
\item[(a)] are disjoint from the cubes $C' \in \mathcal{C}$, 

\item[(b)]
have $d(C) \ge d(C')$ for all $C' \in \mathcal{C}$, and 

\item[(c)] for which
there exists a set $S \in \mathcal{S}$ which has not yet been
assigned and such that assigning $C$ to $S$ is a proper assignment. 
\end{itemize}

Take a cube $C_0$ of minimum codimension satisfying (a), (b) and (c) and add it
to the tiling. Assign $C_0$ to the set $S_{C_0} \in \mathcal{S}$
given by (c).
\end{quote}

The following proposition shows that the algorithm will terminate successfully.
\begin{proposition} \label{prop:K3_tiling}
If the tiling is not complete, then the algorithm always chooses a cube from
a non-empty collection.
\end{proposition}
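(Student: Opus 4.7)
Pick an uncovered vertex $v\in Q_n$, and for each $d\ge d_{\max}$ consider the special cube $C_{v,d}=(v_1,\ldots,v_d,*,\ldots,*)$. The plan is to show that for some such $d$ there is an unassigned codimension-$d$ set $S\in\mathcal{S}$ whose assignment to $C_{v,d}$ is proper, and to derive this by a weighted counting argument against the hypothesis $\sum_S|S|\ge N/2\ge 3500\cdot 2^n$. Property~(a) holds because if $C_{v,d}\cap C'\ne\emptyset$ for some $C'\in\mathcal{C}$, Proposition~\ref{prop:K3_cuberelation}(i) forces $C_{v,d}\subseteq C'$ or $C'\subseteq C_{v,d}$; the first is impossible because $v$ is uncovered, and the second because $d(C')\le d_{\max}\le d$ forces $|C'|\ge|C_{v,d}|$ (with equality giving $C'=C_{v,d}$ and again placing $v$ in $C'$). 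Property~(b) holds by choice of~$d$.

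I then bound the ``bad'' codimension-$d$ sets for $C_{v,d}$, i.e.\ those failing the blue-edge condition for some cube of $\mathcal{C}$ adjacent to $C_{v,d}$. Proposition~\ref{prop:K3_cuberelation}(iii) (whose proof applies verbatim since $C_{v,d}$ has $d$ fixed coordinates) gives at most $d$ such adjacent cubes, each of codimension $\le d_{\max}\le d$, so $\delta=d$ in every adjacent pair. For any fixed adjacent $C'$, Proposition~\ref{prop:K3_preprocess}(i) with $i=d$ bounds the total number of blue edges from $S_{C'}$ into $\bigcup_{d(S)\ge d}S$ by $|S_{C'}|\cdot 2^{n-d+3}$; dividing by the threshold $|S||S_{C'}|/(16d^2)=|S_{C'}|\cdot 2^{n-d}/(4d^2)$ yields at most $32d^2$ bad codimension-$d$ sets per $C'$, and hence at most $32d^3$ bad codimension-$d$ sets for $C_{v,d}$.

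Suppose now that condition~(c) fails for every $d\ge d_{\max}$. Write $M_d$ for the number of currently assigned codimension-$d$ sets and $N_d=|\{S\in\mathcal{S}:d(S)=d\}|$. The hypothesis gives $N_d\le 32d^3+M_d$ for $d\ge d_{\max}$, and the same bound extends to $1\le d<d_{\max}$ by a \emph{freezing} argument: at the latest step $T$ at which the algorithm moved past codimension~$d$ (so $d_{\max}^{(T-1)}\le d$ and a cube of codimension $>d$ was placed at step~$T$), the tiling was still incomplete, so not every codimension-$d$ subcube of $Q_n$ can have been contained in a cube of $\mathcal{C}^{(T-1)}$ of lower codimension (else $Q_n$ would already have been covered); therefore some codimension-$d$ cube disjoint from $\mathcal{C}^{(T-1)}$ failed~(c) at that moment, giving $N_d\le 32d^3+M_d^{(T-1)}$, and since codimensions in $\mathcal{C}$ are non-decreasing $M_d^{(T-1)}=M_d$. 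Finally $N_0=0$: otherwise the first step of the algorithm would assign $Q_n$ to a codimension-$0$ set (the leftover, if added, is such a set) and finish. Weighting the inequality by $4\cdot 2^{n-d}$ and summing over $d\ge 1$,
\[
\sum_{S\in\mathcal{S}}|S|=\sum_{d\ge 1}N_d\cdot 4\cdot 2^{n-d}\le 128\cdot 2^n\sum_{d\ge 1}\frac{d^3}{2^d}+4\sum_d M_d\cdot 2^{n-d}\le(128\cdot 26+4)\cdot 2^n=3332\cdot 2^n,
\]
using $\sum_{d\ge 1}d^3/2^d=26$ and that $\sum_d M_d\cdot 2^{n-d}$ is the number of vertices of $Q_n$ covered by $\mathcal{C}$, at most $2^n$. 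This contradicts $\sum_S|S|\ge 3500\cdot 2^n$.

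The principal obstacle is the freezing step for $d<d_{\max}$: one must simultaneously use that $M_d$ is immutable once the algorithm's codimension jumps past $d$, and that the tiling's incompleteness at every intermediate step rules out the alternative in which every codimension-$d$ cube of $Q_n$ already sits inside a lower-codimension cube of $\mathcal{C}^{(T-1)}$. Once this is in place the arithmetic closes routinely, the identity $\sum_{d\ge 1}d^3/2^d=26$ and the constant $7000$ in $N\ge 7000\cdot 2^n$ providing a comfortable slack of $3500-3332=168$.
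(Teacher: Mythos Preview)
Your argument is correct and follows essentially the same route as the paper's: the same bad-set count (at most $32d^2$ per adjacent cube via Proposition~\ref{prop:K3_preprocess}(i), hence at most $32d^3$ total), the same identity $\sum_{d\ge 1} d^3/2^d = 26$, and the same backward-in-time device. The only difference is organizational: the paper pigeonholes on the family $\mathcal{S}'$ of unassigned sets to locate a single index $i$ with $|\mathcal{S}'_i| > 32i^3$, and then checks $i \ge d_{\max}$ by going back to the last step at which a cube of codimension $\le i$ was placed---this is exactly your freezing argument in the contrapositive---whereas you assume failure for all $d\ge d_{\max}$, extend the inequality $N_d \le 32d^3 + M_d$ to every $d < d_{\max}$ via the freezing step, and sum. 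One small wording point in that step: ``not contained in a cube of $\mathcal{C}^{(T-1)}$ of lower codimension'' does not by itself give disjointness from $\mathcal{C}^{(T-1)}$ (the cube might itself lie in $\mathcal{C}^{(T-1)}$ with codimension exactly $d$); the clean fix is simply to take $C_{w,d}$ through any vertex $w$ uncovered at time $T-1$, exactly as you did for $d\ge d_{\max}$.
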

\begin{proof}
If $\mathcal{S}$ contains a set of codimension
zero, then the algorithm will choose the whole cube $Q_n$ in the
first step and assign it to a set of codimension zero.
In this case we have a trivial tiling and there is nothing to prove.

Thus, we may assume that $\mathcal{S}$ contains no set of codimension zero.
If this is the case, note that for all $C \in \mathcal{C}$, we have $|S_C| = 4|C|$. Let
$\mathcal{S}''$ be the subfamily of $\mathcal{S}$ consisting of sets
which are already assigned to some cube in $\mathcal{C}$ and let
$\mathcal{S}' = \mathcal{S} \setminus \mathcal{S}''$. Note that
\begin{align*}
   \Big| \bigcup_{S\in \mathcal{S}'} S \Big|
   = \Big|\bigcup_{S\in \mathcal{S}} S\Big| - \Big|\bigcup_{S \in \mathcal{S}''} S \Big|
   \ge \frac{N}{2} - \Big|\bigcup_{C \in \mathcal{C}} S_C \Big| \ge 3500 \cdot 2^{n} - 4 \cdot 2^{n} \ge 3496 \cdot 2^n.
\end{align*}
For each $i$, let $\mathcal{S}'_i = \{S \in \mathcal{S}' : d(S) = i\}$. If 
$|\mathcal{S}'_i| \le 32i^3$ for all $i$, then we have
\begin{align*}
\left| \bigcup_{S \in \mathcal{S}'} S \right|
   = \sum_{i} \left| \bigcup_{S \in \mathcal{S}'_i} S \right|
   &\le \sum_{i} 32i^3 \cdot 4 \cdot 2^{n-i}
   = 128 \cdot 2^{n} \sum_{i} \frac{i^3}{2^{i}}
   = 3328 \cdot 2^{n},
\end{align*}
which is a contradiction (we used the fact that $\sum_{i} \frac{i^3}{2^i} = 26$). 
Therefore, there exists an index $i$ for which $|\mathcal{S}'_i| > 32i^3$.
We have $i > 0$ since $\mathcal{S}$ contains no set of codimension zero.

Since the tiling is not complete, there exists a
vertex $(a_1, \ldots, a_n) \in Q_n$ which is not yet covered by any
of the cubes already in $\mathcal{C}$. Consider the cube $C = (a_1,
\ldots, a_i, *, \ldots, *)$. By Proposition
\ref{prop:K3_cuberelation}, there are at most $i$ cubes in our
partial tiling $\mathcal{C}$ of codimension at most $i$ which are
adjacent to this cube. Let $C_1, \ldots, C_j$ for $j\le i$ be these
cubes and fix an index $a \le j$. We say that a set $S \in
\mathcal{S}_i'$ is \emph{bad} for $S_{C_a}$ if there are at least
$\frac{1}{16i^2}|S_{C_a}||S|$ blue edges between $S_{C_a}$ and $S$.
We claim that there are at most $32i^2$ sets in $\mathcal{S}'_i$
which are bad for $S_{C_a}$. 

Let $X_i = \bigcup_{S \in
\mathcal{S}'_i} S$ and note, by Proposition
\ref{prop:K3_preprocess}(i), that there are at most $|S_{C_a}|\cdot
2^{n-i+3}$ blue edges between the sets $S_{C_a}$ and $X_i$. Each set
$S \in \mathcal{S}'_i$ which is bad for $S_{C_a}$ accounts for at
least $\frac{1}{16i^2}|S_{C_a}||S| =
\frac{2^{n-i+3}}{32i^2}|S_{C_a}|$ such blue edges (note that $|S| = 4\cdot 2^{n-i}$). Therefore, in
total, there are at most $32i^2$ sets in $\mathcal{S}'_i$ which are
in bad relation with $S_{C_a}$, as claimed above. Since
$|\mathcal{S}'_i| > 32i^3$, there exists a set $S \in
\mathcal{S}'_i$ which is not bad for any of the sets
$S_{C_a}$ for $1 \leq a \le j \le i$.

Suppose that in the previous step we embedded some cube of codimension $d$.
In order to show that $C$ satisfies (a) and (b), it suffices to
verify that $i \ge d$, since $i \ge d$ implies the fact that $C$ is
disjoint from all the other cubes in $\mathcal{C}$ (if $C$
intersects some other cube, then that cube must contain $C$ and
therefore also contains $(a_1, \ldots, a_n)$ by Proposition
\ref{prop:K3_cuberelation}). Furthermore, if this is the case,
assigning $C$ to $S$ is a proper assignment (thus we have
(c)), since $C_1, \ldots, C_j$ are the only cubes adjacent to $C$ already in the tiling.

Suppose, for the sake of contradiction, that $i < d$
and consider the time $t$ immediately after we last embedded a cube
of codimension at most $i$. At time $t$, since $(a_1, \ldots, a_n)$ was not
covered, the cubes in $\mathcal{C}$ are disjoint from $C$. Moreover,
the set of cubes of codimension at most $i$ which are adjacent to $C$ is the
same as at the current time. Therefore, $C$ could have been added to the
tiling at time $t$ as well, contradicting the fact that we
always choose a cube of minimum codimension. Thus we have $i \ge d$,
as claimed.
\end{proof}

Note that as an outcome of our algorithm, we obtain a tiling $\mathcal{C}$ for which every pair of adjacent cubes $C, C' \in \mathcal{C}$ have  at
most $\frac{1}{16 \delta^2} |S_C||S_{C'}|$ blue edges between $S_C$ and $S_{C'}$, where 
$\delta = \max\{d(C), d(C')\}$.

\subsection{Imposing a maximum degree condition} \label{sec:K3_maxdegree}

Having constructed the tiling $\mathcal{C}$ and made the
assignment of the cubes to sets in $\mathcal{S}$, we
now wish to impose certain maximum degree conditions between the sets $S_C$ for
$C \in \mathcal{C}$. For a set $C \in \mathcal{C}$ of codimension $d
= d(C)$, let $C_1, \ldots, C_{j}$ be the cubes of codimension at
most $d$ which are adjacent to $C$. Note that we have $j \le d$ by
Proposition \ref{prop:K3_cuberelation}. Moreover, there are
at most $\frac{1}{16d^2}|S_C||S_{C_a}|$ blue edges between $S_C$ and
$S_{C_a}$ for all $a \le j$. 

Now, for each $a$, remove all the vertices in $S_C$ which have at
least $\frac{1}{8d}|S_{C_a}|$ blue neighbors in $S_{C_a}$ and let
$T_C$ be the subset of $S_C$ left after these removals. Since there
are at most $\frac{1}{16d^2}|S_C||S_{C_a}|$ blue edges between $S_C$
and $S_{C_a}$, for each index $a$, we remove at most
$\frac{|S_C|}{2d}$ vertices from $S_C$, and thus the resulting set
$T_C$ is of size at least $\frac{|S_C|}{2} \ge 2 \cdot 2^{n-d}$.  Note that all the vertices
in $T_C$ have blue degree at most $\frac{1}{8d}|S_{C_a}| \le
\frac{1}{4d}|T_{C_a}|$ in the set $T_{C_a}$. That is, we have the
following property.

\begin{quote}
\textbf{Maximum degree condition.} For each pair of adjacent cubes $C, C' \in
\mathcal{C}$ with $d(C) \ge d(C')$, every vertex in $T_C$ has at
most $\frac{1}{4d(C)}|T_{C'}|$ blue neighbors in the set $T_{C'}$.
\end{quote}

\subsection{Embedding the cube}

We now show how to embed $Q_n$. Recall that $Q_n$ was tiled by cubes
in $\mathcal{C}$, each corresponding to a subset from the family $\mathcal{S}$. We will greedily embed these cubes one by one into their assigned sets from the family $\mathcal{S}$, in
decreasing order of their codimensions. If there are several cubes
of the same codimension, then we arbitrary choose the order between
them. 

For each $C \in \mathcal{C}$, we will greedily embed the vertices of $C$ into the set $T_C \subseteq S_C$. Let $d = d(C)$. Suppose that we are about to embed $x \in C$ and let $f :
Q_n \rightarrow [N]$ denote the partial embedding of the cube $Q_n$
obtained so far. Note that $x$ has at most $d$ neighbors
$x_1, x_2, \ldots, x_j$ (for $j \le d$) which are already embedded and belong to a
cube other than $C$. Since we have only embedded cubes of codimension at
least $d$, the maximum degree
condition imposed in Section \ref{sec:K3_maxdegree} implies that all the vertices $f(x_i)$ have blue degree at most
$\frac{1}{4d}|T_C|$ in the set $T_C$. Together, these
neighbors forbid at most $\frac{1}{4}|T_C|$ vertices of
$T_C$ from being the image of $x$. 

In addition, $x$ has at most $n-d$
neighbors $y_1, \ldots, y_{k}$ (for $k \le n-d$) which are already embedded and
belong to $C$. By Proposition \ref{prop:K3_preprocess}, each vertex
$f(y_i)$ has blue degree at most $\frac{2^{n-d}}{2n}$ in the set
$T_C$. Together, these neighbors forbid at most
$\frac{2^{n-d}}{2}$ vertices of $T_C$ from being the image of $x$.
Finally, there are at most $2^{n-d}-1$ vertices in $T_C$ which are
images of some other vertex of $C$ that is already embedded. Therefore, the
number of vertices in $T_C$ into which we cannot embed $x$ is at most
\[ \frac{1}{4}|T_C| + \frac{1}{2} \cdot 2^{n-d} + (2^{n-d}-1), \]
which is less than $|T_C|$ since $|T_C| \ge 2 \cdot 2^{n-d}$. Hence,
there exists a vertex in $T_C$ which we can choose as an image of
$x$ to extend the current partial embedding of the cube. Repeating
this procedure until we embed the whole cube $Q_n$
completes the proof.

\section{Cliques of order 4} \label{sec:K4}

The argument for general cliques is similar to that for triangles
given in the previous section. However, there are several new concepts
involved. To slowly develop the necessary concepts, we first provide
a proof of the next case, which is $K_4$ versus a cube.

Recall that in the triangle case, we started by iteratively
finding sets which formed a red clique (see Section \ref{sec:K3_preprocess}).
Red cliques were a natural choice, since the blue
neighborhood of every vertex formed a red clique. Either we
were able to find large red cliques or we were able to restrict the maximum
blue degree of vertices in some way (see Proposition \ref{prop:K3_preprocess}).
If one attempts to employ the same strategy for the $K_4$ case, then
the natural choice of sets that we should take instead of red cliques
are blue triangle-free sets, since the blue neighborhood of every vertex
now forms a blue triangle-free set. Suppose that we have found a family $\mathcal{S}_{1}$
of blue triangle-free sets. Since blue $K_3$-free sets are not as powerful
as red cliques for embedding subgraphs, we repeat the whole argument
within each blue triangle-free set $S \in \mathcal{S}_{1}$,
to obtain red cliques which are subsets of $S$. By so doing, we obtain a second family of sets
$\mathcal{S}_{2}$, consisting of red cliques. We refer to sets in $\mathcal{S}_{\ell}$ as
\emph{level $\ell$} sets and, for a set $S \in \mathcal{S}_{\ell}$,
we define its level as $\ell(S) = \ell$.

In order to find an embedding of the cube $Q_n$ using a strategy similar to
that used in the triangle case,  we wish to find a tiling
of $Q_n$ and, for each cube $C$ in the tiling, a red clique $S_C \in \mathcal{S}_{2}$
so that for two adjacent cubes $C$ and $C'$, the sets $S_C$ and $S_{C'}$ stand in
`good' relation. However, directly finding such a tiling and an assignment
is somewhat difficult since we do not have good control on the blue edges
between the sets in $\mathcal{S}_{2}$. To be more precise, suppose that
we are given $S_1, S_1' \in \mathcal{S}_{1}$ and subsets
$S_2 \subset S_1, S_2' \subset S_1'$ in $\mathcal{S}_{2}$.
Then the control on the blue edges between $S_2$ and $S_2'$ is `inherited'
from the control on the blue edges between $S_1$ and $S_1'$, and thus depends
on the relative sizes of $S_1$ and $S_1'$, not on the relative sizes of $S_2$
and $S_2'$ as in the triangle case (unless $S_1 = S_1'$). To circumvent this difficulty, 
we will need to maintain tight control on the edge density between different sets in 
$\mathcal{S}_1$ as well as those in $\mathcal{S}_2$.

We seek a \emph{double tiling},
which is defined to be a pair
$\mathcal{C} = \mathcal{C}_{1} \cup \mathcal{C}_{2}$ of tilings satisfying
the property that for every $C_2 \in \mathcal{C}_{2}$ there exists a cube
$C_1 \in \mathcal{C}_{1}$ such that $C_2 \subset C_1$ (in other words,
$\mathcal{C}_{2}$ is a \emph{refined} tiling of $\mathcal{C}_{1}$).
We refer to cubes in $\mathcal{C}_{\ell}$ as \emph{level $\ell$} cubes
and, for a cube $C \in \mathcal{C}_{\ell}$, we define its level as $\ell(C) = \ell$.
Our goal is to find, for each $\ell=1,2$, an assignment of
cubes in $\mathcal{C}_{\ell}$ to sets in $\mathcal{S}_{\ell}$. The following are
the key new concepts involved in the $K_4$ case.

\begin{definition}
\begin{itemize}
\item[(i)] For a cube $C \in \mathcal{C}_{1}$, we define its \emph{1-codimension} as
$d_1(C) = d(C)$. For a cube $C \in \mathcal{C}_{2}$ contained in
a cube $C_1 \in \mathcal{C}_{1}$, we define its \emph{1-codimension} as
$d_1(C) = d(C_1)$ and its \emph{2-codimension} as $d_2(C) = d(C) - d(C_1)$.
\item[(ii)] Suppose $C, C' \in \mathcal{C}$ are adjacent. We say that they
have \emph{level 1 adjacency} if the level 1 cubes containing $C$ and $C'$
are different. Otherwise, they have \emph{level 2 adjacency} (note that
$C$ and $C'$ are both level 2 cubes in the second case).
\end{itemize}
\end{definition}

After finding the double tiling, in order to find an embedding of the cube,
we only consider the level 2 tiling (as explained
above, we need to go through a level 1 tiling in order to obtain a `nice' level 2 tiling).
For two adjacent cubes $C, C' \in \mathcal{C}_{2}$, the definition of the
corresponding sets $S_C, S_{C'}$ being in `good relation' (see the definition
of proper assignment in Section \ref{sec:K4_tiling}) now depends on
the type of adjacency they have. If they have level 1 adjacency, then
the good relation will be defined in terms of their 1-codimensions
and if they have level 2 adjacency, then it will be in terms of their 2-codimensions.
By doing this, we can overcome the above mentioned difficulty of not having
enough control on the blue edges between $S_C$ and $S_{C'}$. Afterwards, we
proceed as in the previous section, by imposing a maximum degree condition between
cubes in $\mathcal{C}_2$ and then embedding the cube.

We now provide the details of the proof.
Our argument works for $n \ge 32$. Consider a coloring of the edges of the complete graph $K_N$ on
the vertex set $[N]$ for $N \ge 2^{46} \cdot 2^n$
with two colors, red and blue, and assume that there is no blue
$K_4$. We prove that this coloring contains a red $Q_n$.

\subsection{Preprocessing the coloring} \label{sec:K4_preprocess}

Note that we have $n \ge 2\log n + 21$ for $n \ge 32$.
We use the following procedure to construct our first level $\mathcal{S}_{1}$ of subsets of $[N]$:

\begin{quote}
For each $d=0, 1, 2, \ldots, \log n + 18$,
if there exists a set $S$ which induces a blue triangle-free graph of order
exactly $2^{18} \cdot 2^{n-d}$, then arbitrarily choose one, add it to
the family $\mathcal{S}_{1}$, and remove the vertices of $S$ from
$[N]$. We define the \emph{1-codimension} $d_1(S)$ of such a set as $d_1(S)
= d$. When there are no more such sets, continue to the next
value of $d$. If, after running through all values of $d$,
$\sum_{S \in \mathcal{S}} |S| < \frac{N}{2}$, then add the set of
remaining vertices to $\mathcal{S}$ and declare it to be an \emph{exceptional set}
with 1-codimension zero (note that this set has size at least $\frac{N}{2} \ge 2^{18} \cdot 2^{n}$).
\end{quote}

Now we perform a similar decomposition for each set in $\mathcal{S}_{1}$ to construct
our second level $\mathcal{S}_{2}$.
For each set $S_1 \in \mathcal{S}_{1}$, consider the following procedure
(suppose that $S_1$ has codimension $d_1 = d(S_1)$):

\begin{quote}
1. If $S_1$ is not exceptional: for each $d=0, 1, 2, \ldots, \log n + 3$,
if there exists a subset $S \subset S_1$ which induces a red clique
of order exactly $8 \cdot 2^{n-d_1 - d}$, then arbitrarily choose one, add it to
the family $\mathcal{S}_{2}$, and remove the vertices of the clique from
$S_1$. We define the \emph{2-codimension} of such a set as $d_2(S)
= d$ and the \emph{1-codimension} as
$d_1(S) = d_1$. When there are no more such red cliques, continue to the next
value of $d$. If, after running through all values of $d$,
$\sum_{S \in \mathcal{S}_{2}, S \subset S_1} |S| < \frac{|S_1|}{2}$, then add the set of
remaining vertices to $\mathcal{S}_{2}$, and declare it to be an
\emph{exceptional set} with 2-codimension zero and 1-codimension $d_1$
(note that this set has size at least $\frac{|S_1|}{2} \ge 8 \cdot 2^{n-d_1}$).

2. If $S_1$ is exceptional: add $S_1$ to the family $\mathcal{S}_{2}$.
Define the $2$-codimension of $S_1 \in \mathcal{S}_{2}$ as zero and its $1$-codimension
as zero.
\end{quote}

Let $\mathcal{S} = \mathcal{S}_{1} \cup \mathcal{S}_{2}$ (we suppose that $\mathcal{S}$ is a multi-family and,
thus, if a set is in both $\mathcal{S}_{1}$ and $\mathcal{S}_{2}$, then we have two copies
of this set in $\mathcal{S}$, however they can be distinguished by their levels).
For a set $S \in \mathcal{S}$, define its \emph{codimension} as
$d(S) = \sum_{i \le \ell(S)} d_i(S)$.

\begin{proposition} \label{prop:K4_preprocess}
\begin{enumerate}[(i)]
  \setlength{\itemsep}{1pt} \setlength{\parskip}{0pt}
  \setlength{\parsep}{0pt}
\item $\sum_{S \in \mathcal{S}_{1}} |S| \ge \frac{N}{2}$ and, for every $S_1 \in \mathcal{S}_{1}$,
we have $\sum_{S \in \mathcal{S}_{2}, S \subset S_1} |S| \ge \frac{|S_1|}{2}$.
\item For an integer $i \ge 1$, let $X = \bigcup_{S \in \mathcal{S}_{1}, d_1(S) \ge i}S$. Then each vertex
$v \in [N]$ has at most $2^{19} \cdot 2^{n-i}$ blue neighbors in $X$.
\item For a set $S_1 \in \mathcal{S}_{1}$ and an integer $i \ge 1$, let
\[ X = \bigcup_{S \in \mathcal{S}_{2}, S \subset S_1, d_2(S) \ge i}S. \] Then each vertex $v \in S_1$
has at most $16 \cdot 2^{n-d_1(S_1)-i}$ blue neighbors in $X$.
\item For every set $S \in \mathcal{S}_{2}$, the subgraph induced by $S$ has maximum blue degree at most $\frac{2^{n-d(S)}}{n}$.
\end{enumerate}
\end{proposition}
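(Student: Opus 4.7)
The proposition is the $K_4$-analogue of Proposition~\ref{prop:K3_preprocess}, so I propose to argue the four parts in essentially the same spirit as that proof, but using two structural consequences of the no-blue-$K_4$ hypothesis corresponding to the two levels: for any vertex $v$, the blue neighborhood of $v$ is blue triangle-free (else with $v$ it would span a blue $K_4$); and for any $v$ in a blue triangle-free set $S_1$, the blue neighborhood of $v$ inside $S_1$ is in fact a red clique (a blue edge among these neighbors, together with $v$, would be a blue triangle in $S_1$). Part (i) is built directly into the construction: if the greedy extraction at level 1 (resp.\ level 2 inside some $S_1$) covers fewer than half the vertices of $[N]$ (resp.\ of $S_1$), the leftover is explicitly appended as an exceptional set.

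For part (ii), at the moment the level 1 algorithm finishes processing codimension $d = i-1$, no subset of the remaining vertices induces a blue triangle-free graph of size $2^{18} \cdot 2^{n-i+1} = 2^{19} \cdot 2^{n-i}$ (otherwise it would have been extracted). Since $X$ is contained in the set of vertices still remaining at that moment, the same holds for subsets of $X$. By the first structural observation, the blue neighborhood of $v$ inside $X$ is blue triangle-free, hence has size at most $2^{19} \cdot 2^{n-i}$.

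For part (iii), if $S_1$ is exceptional then $\mathcal{S}_2$ contains only $S_1$ itself among subsets of $S_1$, with $d_2 = 0$, so $X$ is empty and the statement is vacuous. Otherwise the level 2 extraction inside $S_1$ was carried out, and at the end of codimension $i-1$ no subset of what remains in $S_1$ is a red clique of size $8 \cdot 2^{n-d_1(S_1)-i+1} = 16 \cdot 2^{n-d_1(S_1)-i}$; since $X$ lies in this remainder and, by the second structural observation, the blue neighborhood of $v$ inside $X \subseteq S_1$ is a red clique, the bound follows. For part (iv) I would split into three cases according to the type of $S \in \mathcal{S}_2$: if $d_2(S) \ge 1$ then $S$ is a red clique and the blue degree inside $S$ is zero; if $S$ is the level 2 exceptional set inside a non-exceptional $S_1$ (so $d(S) = d_1(S_1)$), then running level 2 through its maximum codimension $\log n + 3$ leaves no red clique of size $8 \cdot 2^{n - d_1(S_1) - \log n - 3} = 2^{n-d(S)}/n$ in $S$, and the blue neighborhood of $v$ inside $S$ is a red clique by the second observation; if $S$ is a level 1 exceptional set reappearing in $\mathcal{S}_2$ with $d(S) = 0$, then running level 1 through its maximum codimension $\log n + 18$ leaves no blue triangle-free set of size $2^{18} \cdot 2^{n - \log n - 18} = 2^n/n$ in $S$, and the blue neighborhood of $v$ inside $S$ is blue triangle-free by the first observation.

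The proof contains no delicate calculation; the main obstacle is the bookkeeping: in parts (iii) and (iv) one must handle the exceptional sets separately, and in (iv) one has to match the combined codimension $d(S) = d_1(S) + d_2(S)$ to the correct stopping threshold at the correct level.
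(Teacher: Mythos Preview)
Your proposal is correct and follows the same approach as the paper's own proof, which likewise treats (i) as immediate from the construction, omits (ii) as analogous to Proposition~\ref{prop:K3_preprocess}(i), and handles (iii) and (iv) by the same case split into exceptional versus non-exceptional sets using exactly the two structural observations you isolate. One small bookkeeping slip: in your case analysis for (iv), the first case should be ``$S$ is a non-exceptional level~2 set'' rather than ``$d_2(S)\ge 1$'', since a red clique extracted at step $d=0$ of the level~2 procedure has $d_2(S)=0$ yet is not exceptional; with that correction your three cases are exhaustive and match the paper's.
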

\begin{proof}
Part (i) is clear and we omit the proof of (ii), since its proof is similar to that of part (i) of Proposition \ref{prop:K3_preprocess}.

(iii) Suppose that $S_1$ is an exceptional set. Then $S_1$ is the unique set $S$ 
satisfying $S \in \mathcal{S}_{2}$ and $S \subset S_1$. Since $d_2(S_1) = 0$,
$X$ is an empty set for every $i \ge 1$. Thus the conclusion follows.

Now suppose that $S_1$ is not an exceptional set. Since $S_1$ induces a
blue triangle-free set, a blue neighborhood in $S_1$ of a vertex $v \in S_1$
forms a red clique. Thus, if a vertex $v \in S_1$ has more than $16 \cdot 2^{n-d_1(S_1)-i}$
blue neighbors in $X$, then inside the neighborhood of $v$ we can find a
set inducing a red clique of size at least $8 \cdot 2^{n-d_1(S_1)-(i-1)}$. However, this
set must have been added in the previous step. Therefore, there are
no such vertices.

(iv) Suppose that $S \subset S_1$ with $S_1 \in \mathcal{S}_1$. If both $S$ and $S_1$
are not exceptional sets, then $S$ induces a red clique and the conclusion follows.
Suppose now that $S$ is exceptional and $S_1$ is not. Then, as in (iii), we can see
that there is no vertex in $S$ which has at least $8 \cdot 2^{n - d_1(S) - (\log n + 3)}
= \frac{2^{n-d(S)}}{n}$ blue neighbors in $S$.
One can similarly handle the case when $S_1$ is exceptional, since we have $S=S_1$ in this case.
\end{proof}

\subsection{Tiling the cube} \label{sec:K4_tiling}

The following proposition is similar to Proposition \ref{prop:K3_cuberelation} 
(we omit its proof).

\begin{proposition} \label{prop:K4_cuberelation}
Let $\mathcal{C} = \mathcal{C}_{1} \cup \mathcal{C}_{2}$ be a double tiling, and
let $C \in \mathcal{C}$ be a special cube of codimension $d$.
\begin{enumerate}[(i)]
  \setlength{\itemsep}{1pt} \setlength{\parskip}{0pt}
  \setlength{\parsep}{0pt}
\item If $C$ is a level 1 cube, then, for each $\ell=1, 2$, there are at most
$d_1(C)$ special cubes of level $\ell$ and codimension at most $d$ which are adjacent to $C$ (and $C$
has level $1$ adjacency with all these cubes).
\item If $C$ is a level 2 cube, then there are at most $d_2(C)$ special cubes of codimension at most $d$
which have level $2$ adjacency with $C$ (they are necessarily of level 2)
and, for $\ell=1,2$, at most $d_1(C)$ cubes of level $\ell$
and codimension at most $d$ which have level $1$ adjacency with $C$.
\end{enumerate}
\end{proposition}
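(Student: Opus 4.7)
The plan is to lift the single-tiling argument of Proposition \ref{prop:K3_cuberelation}(iii) to the double-tiling setting, partitioning the $d(C)$ candidate coordinate-flips according to whether they lie inside or outside the level-1 prefix of $C$. The key auxiliary fact, which I would state and prove at the start, is a per-position uniqueness statement: for any special cube $C$, any tiling $\mathcal{T}\in\{\mathcal{C}_1,\mathcal{C}_2\}$, and any index $j\in\{1,\ldots,d(C)\}$, there is at most one cube $C'\in\mathcal{T}$ with $d(C')\le d(C)$ that is adjacent to $C$ and disagrees with it at position $j$. The proof is identical to the one-tiling argument: two such candidates would share the same $j$-flipped prefix on a common initial segment, so by Proposition \ref{prop:K3_cuberelation}(i) one would be nested in the other, contradicting disjointness within $\mathcal{T}$.

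Part (i) is then immediate. If $C\in\mathcal{C}_1$ has codimension $d=d_1(C)$, every adjacent cube is disjoint from $C$, so no level-1 cube adjacent to $C$ equals $C$ and no level-2 cube adjacent to $C$ is contained in $C$; in either case the level-1 cube containing $C'$ is different from $C$, so the adjacency is of level 1. Summing the uniqueness statement over the $d$ possible flip positions and each $\ell\in\{1,2\}$ yields the bound $d_1(C)$ per level.

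For part (ii), let $C\in\mathcal{C}_2$ sit inside $C_1\in\mathcal{C}_1$, so $d(C_1)=d_1(C)$ and $d(C)=d_1(C)+d_2(C)$, and let $C'$ be an adjacent cube with unique disagreeing prefix coordinate $j\le d(C)$. If $j\le d_1(C)$, then the $j$-th entry of $C'$'s prefix is the flip of $C_1$'s $j$-th prefix entry, so $C'\not\subset C_1$ and the adjacency is of level 1; uniqueness applied to the $d_1(C)$ positions in this range, separately in each $\mathcal{C}_\ell$, yields the $d_1(C)$ bound per level. If $j>d_1(C)$, then $C$ and $C'$ agree on the first $d_1(C)$ coordinates, which is exactly $C_1$'s prefix, forcing $C'\subset C_1$. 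A cube in $\mathcal{C}_1$ strictly contained in $C_1$ would violate the disjointness of the level-1 tiling, so $C'$ must lie in $\mathcal{C}_2$ and the adjacency is of level 2; uniqueness over the $d_2(C)$ positions in this range then gives the $d_2(C)$ bound. The only bookkeeping that genuinely requires the double-tiling structure is this last case, where the level-1 tiling property is invoked to rule out spurious level-1 candidates for $C'$.
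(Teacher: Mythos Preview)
Your proof is correct and is exactly the argument the paper has in mind: the authors omit the proof entirely, noting only that it is ``similar to Proposition \ref{prop:K3_cuberelation}'', and your write-up fills in precisely that extension. The per-position uniqueness lemma you isolate is the right auxiliary statement, and your case split on whether the flipped coordinate lies in $\{1,\ldots,d_1(C)\}$ or in $\{d_1(C)+1,\ldots,d(C)\}$ cleanly separates level-1 from level-2 adjacency, with the level-1 tiling's disjointness ruling out level-1 candidates in the latter range just as you say.
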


Our double tiling $\mathcal{C}$ of the cube $Q_n$ will be constructed in
correspondence with the family $\mathcal{S}$ constructed in Section
\ref{sec:K4_preprocess}. We will construct $\mathcal{C}$
by finding cubes of the tiling one by one. We slightly abuse
notation and use $\mathcal{C}$ also to denote the `partial' double tiling,
where only part of the cube $Q_n$ is covered.
Ideally, we would like to construct $\mathcal{C}$ by
constructing the level 1 tiling $\mathcal{C}_{1}$ first and
then the level 2 tiling $\mathcal{C}_{2}$. However, as we will
soon see, it turns out that constructing the tiling in increasing
order of codimension is more effective than in increasing order of
level. At each step, we find
a subcube $C$ which covers some non-covered part of $Q_n$ and
assign it to some set $S_C \in \mathcal{S}$. We say that such an
assignment is \emph{proper} if the following properties hold.

\begin{quote}
{\bf Proper assignment.}

1. $\ell(C) = \ell(S_C)$ and, for $\ell = \ell(C)$, we have $d_\ell(C) = d_\ell(S_C)$.

2. If $C_2 \subset C_1$ with $C_1 \in \mathcal{C}_1$ and $C_2 \in \mathcal{C}_2$, then
$S_{C_2} \subset S_{C_1}$.

3. Suppose that $C$ is adjacent to some cube $C'$ which is already in the
tiling, where $C'$ is assigned to $S_{C'}$ and
$C$ and $C'$ have level $\rho$ adjacency. Then the number
of blue edges in the bipartite graph induced by $S_C$ and $S_{C'}$
is at most $|S_C||S_{C'}| \cdot (8\delta)^{\ell(C) + \ell(C') - 6}$, where
$\delta = \max\{d_\rho(C), d_\rho(C')\}$.
\end{quote}

Our algorithm for finding the tiling $\mathcal{C}$ and the
corresponding sets in $\mathcal{S}$ is as follows.

\begin{quote}
\textbf{Tiling Algorithm.} At each step, consider all possible special cubes
$C$ which 
\begin{itemize}
\item[(a)] can be added to $\mathcal{C}$ to extend the partial tiling, 

\item[(b)]
have $d(C) \ge d(C')$ for all $C' \in \mathcal{C}$, and 

\item[(c)] for which
there exists a set $S \in \mathcal{S}$ which has not yet been
assigned and such that assigning $C$ to $S$ gives a proper assignment. 
\end{itemize}
Take a
cube $C_0$ of minimum codimension satisfying (a), (b) and (c) and add it
to the tiling. Assign $C_0$ to the set $S_{C_0} \in \mathcal{S}$
given by (c).
\end{quote}

Condition (a) is equivalent to saying that either $C$ is disjoint from all
cubes in $\mathcal{C}_{1}$ or is contained in some cube in $\mathcal{C}_{1}$
and is disjoint from all cubes in $\mathcal{C}_{2}$.
The following proposition shows that the algorithm will terminate successfully.
\begin{proposition} \label{prop:K4_tiling}
If the tiling is not complete, then the algorithm always chooses a cube from
a non-empty collection.
\end{proposition}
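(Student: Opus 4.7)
The plan is to adapt the argument of Proposition \ref{prop:K3_tiling} to the two-level setting, splitting into cases depending on where the uncovered vertex sits relative to the current level 1 tiling.

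First handle the degenerate case: if $\mathcal{S}_1$ contains an exceptional set, the algorithm places $Q_n$ itself as a level 1 cube in step one and as a level 2 cube in step two (both of codimension zero), giving a trivial complete tiling. So assume every $S_1 \in \mathcal{S}_1$ has positive 1-codimension. Suppose the tiling is incomplete, and pick a vertex $v = (a_1, \ldots, a_n) \in Q_n$ covered by no level 2 cube in $\mathcal{C}_2$.

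\emph{Case 1: $v$ lies in no level 1 cube of $\mathcal{C}_1$.} I will produce a valid level 1 cube $C = (a_1, \ldots, a_i, *, \ldots, *)$. By Proposition \ref{prop:K4_preprocess}(i) and the bound $|\bigcup_{C_1 \in \mathcal{C}_1} S_{C_1}| \le O(2^n)$, the total size of unassigned level 1 sets is $\Omega(N)$. Partitioning by 1-codimension and applying a convergent series argument as in Proposition \ref{prop:K3_tiling}, there is some $i \ge 1$ for which the family $\mathcal{S}'_{1,i}$ of unassigned level 1 sets of 1-codimension $i$ exceeds any fixed polynomial threshold in $i$. By Proposition \ref{prop:K4_cuberelation}(i), $C$ has at most $i$ adjacent cubes of each level among those of codimension $\le i$; for each such adjacent $C^*$, Proposition \ref{prop:K4_preprocess}(ii) bounds the blue edges from $S_{C^*}$ into $\bigcup \mathcal{S}'_{1,i}$, so only polynomially many sets in $\mathcal{S}'_{1,i}$ can be ``bad'' for $C^*$ (i.e., violate the $(8i)^{\ell(C) + \ell(C^*) - 6}$ density quota of proper assignment condition 3). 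Summing over the $O(i)$ adjacent cubes still leaves some $S \in \mathcal{S}'_{1,i}$ good for all, giving (c). Property (b) is enforced by the standard time-travel contradiction: if $i < d(C')$ for some $C' \in \mathcal{C}$, then at the step just after the last cube of codimension $\le i$ was placed, $C$ satisfied (a), (b), (c), violating minimality. Property (a) then follows from (b) via the intersection property of special cubes (Proposition \ref{prop:K3_cuberelation}(i)).

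\emph{Case 2: $v$ lies in some $C_1 \in \mathcal{C}_1$ with $d_1 = d_1(C_1)$.} I will produce a level 2 cube $C = (a_1, \ldots, a_{d_1 + j}, *, \ldots, *) \subset C_1$. Proposition \ref{prop:K4_preprocess}(i) guarantees that the unassigned level 2 subsets of $S_{C_1}$ have total size $\ge |S_{C_1}|/2 - O(2^{n-d_1})$, and the analogous convergent-series argument yields a 2-codimension $j$ with many unassigned level 2 sets inside $S_{C_1}$. By Proposition \ref{prop:K4_cuberelation}(ii), $C$ has at most $j$ adjacent level 2 cubes inside $C_1$ (level 2 adjacency) and at most $d_1$ adjacent cubes of each level outside $C_1$ (level 1 adjacency). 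Blue-edge bounds for the first group come from Proposition \ref{prop:K4_preprocess}(iii) applied inside $S_{C_1}$; for the second group they come from Proposition \ref{prop:K4_preprocess}(ii). The two density exponents in proper assignment condition 3, namely $(8\delta)^{\ell(C) + \ell(C^*) - 6}$ with $\delta$ a 2-codimension or a 1-codimension according to the adjacency type, are tuned so that a parallel counting to Case 1 again makes the total number of bad sets smaller than $|\mathcal{S}'_{2,j}|$. Any good $S$ then satisfies (c); $S \subset S_{C_1}$ gives proper assignment condition 2 automatically, and properties (a) and (b) follow as in Case 1.

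The main obstacle is Case 2: one must simultaneously control three adjacency types (level 2 inside $C_1$, and level 1 or level 2 outside $C_1$) using two different preprocessing bounds from Proposition \ref{prop:K4_preprocess}, and verify that the level-dependent density exponents built into proper assignment allow the counting to close against the choice $N \ge 2^{46} \cdot 2^n$. The constants $2^{18}$ in the sizes of level 1 sets and $8$ in the sizes of level 2 sets, together with the $(8\delta)^{\ell(C) + \ell(C^*) - 6}$ factor, are chosen exactly to make this bookkeeping fit.
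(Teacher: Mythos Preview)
Your Case 1 sketch is essentially the paper's argument. The gap is in Case 2, and it is a real one.

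You first run the convergent-series argument to pick the $2$-codimension $j$, and only afterwards try to discard sets that are bad for the level-$1$ neighbours of $C$. This order does not close. For a level-$1$ adjacent cube $A$, the only control you have on blue edges between a candidate $S\in\mathcal S'_{2,j}$ and $S_A$ comes from the edge bound between the \emph{whole} set $S_{C_1}$ and $S_A$; this gives $\sum_{\text{bad }S}|S|\le \frac{1}{8\delta_A}|S_{C_1}|$, hence at most $\frac{|S_{C_1}|}{8d_1\cdot|S|}\asymp 2^{j}/d_1$ bad sets of $2$-codimension $j$ per neighbour. Summing over the $\le 2d_1$ level-$1$ neighbours yields $\Theta(2^{j})$ bad sets, so the threshold you would need on $|\mathcal S'_{2,j}|$ grows like $2^{j}$ and the series $\sum_j 2^{j}\cdot 2^{-j}$ diverges. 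Your citation of Proposition~\ref{prop:K4_preprocess}(ii) for this step is also the wrong tool: that proposition bounds blue degree into a union of level-$1$ sets of a given $1$-codimension, whereas here all candidates $S$ sit inside a \emph{single} level-$1$ set $S_{C_1}$, so the bound is of order $|S_{C_1}|$ and tells you nothing about an individual $S$ of size $8\cdot 2^{n-d_1-j}$.

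The paper's fix is to reverse the order. Before choosing $j$, take the temporary cube $C_2=(a_1,\dots,a_d,*,\dots,*)$, list the level-$1$ neighbours $\mathcal A^{(1)}$ of $C_2$, and delete from $\mathcal F=\{S\in\mathcal S_2:S\subset S_{C_1}\}$ every set bad for some $A\in\mathcal A^{(1)}$. The crucial input here is not Proposition~\ref{prop:K4_preprocess}(ii) but the \emph{properness of the assignment of $C_1$}: it bounds the blue edges between $S_{C_1}$ and $S_A$ by $(8\delta_A)^{\ell(A)-5}|S_{C_1}||S_A|$, so the total \emph{size} of bad sets for each $A$ is at most $\tfrac{1}{8d_1}|S_{C_1}|$, and over all $\le 2d_1$ neighbours one loses at most $\tfrac14|S_{C_1}|$. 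Only on the surviving family does one run the convergent series to find $i$ with $|\mathcal S'_i|>128i^3$, and then handle the level-$2$ neighbours of $C=(a_1,\dots,a_{d_1+i},*,\dots,*)$ via Proposition~\ref{prop:K4_preprocess}(iii) exactly as you describe. The two-step order is not a stylistic choice; it is what makes the bookkeeping converge.
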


\begin{proof}
Suppose that in the previous step we embedded some cube of codimension $d$
(let $d=0$ for the first iteration of the algorithm).
Since the tiling is not complete, there exists a
vertex $(a_1, \ldots, a_n) \in Q_n$ which is not covered twice.

\medskip

\noindent \textbf{Case 1}: $(a_1, \ldots, a_n)$ is not covered by any of
the cubes in $\mathcal{C}_{1}$.

Let $\mathcal{S}''$ be the subfamily of $\mathcal{S}_{1}$ consisting of sets
which are already assigned to some cube in $\mathcal{C}_{1}$, and let
$\mathcal{S}' = \mathcal{S}_{1} \setminus \mathcal{S}''$. If $\mathcal{S}''$
contains a set of codimension zero, then the corresponding cube is the whole
cube $Q_n$, contradicting the fact that
$(a_1, \ldots, a_n)$ is not covered. Thus we may assume that there is
no set of codimension zero in $\mathcal{S}''$, from which it follows that
$|S_C| = 2^{18} |C|$ for all $C \in \mathcal{C}_{1}$. Note that
\begin{align*}
   \Big| \bigcup_{S\in \mathcal{S}'} S \Big|
   = \Big|\bigcup_{S\in \mathcal{S}_{1}} S\Big| - \Big|\bigcup_{S \in \mathcal{S}''} S \Big|
   \ge \frac{N}{2} - \Big|\bigcup_{C \in \mathcal{C}} S_C \Big| \ge 2^{45} \cdot 2^{n} - 2^{18} \cdot 2^{n} > 2^{44} \cdot 2^n.
\end{align*}
For each $i$, let $\mathcal{S}'_i = \{S \in \mathcal{S}' : d(S) = i\}$. Suppose that
$|\mathcal{S}'_i| \le 2^{14} i^5$ for all $i$. Then, since each set in $\mathcal{S}_i'$
has size $2^{18}\cdot 2^{n-i}$, we have
\begin{align*}
\Big| \bigcup_{S\in \mathcal{S}'} S \Big| 
   = \sum_{i} \sum_{S\in \mathcal{S}_i'} |S|
   &\le \sum_{i} 2^{14} i^5 \cdot 2^{18} \cdot 2^{n-i}
   = 2^{32} \cdot 2^{n} \sum_{i} \frac{i^5}{2^{i}}
   < 2^{44} \cdot 2^{n},
\end{align*}
which is a contradiction (we used the fact that $\sum_{i} \frac{i^5}{2^i} < 2^{12}$). 
Therefore, there exists an index $i$ for which $|\mathcal{S}'_i| > 2^{14} i^5$.
We have $i > 0$ since otherwise the set of codimension zero would have been added to the tiling
in the first step.

Let $C = (a_1, \ldots, a_i, *, \ldots, *)$.
For each $\ell=1,2$, let $\mathcal{A}_{\ell}$ be the family
of cubes of level $\ell$ in our partial double tiling which are adjacent to $C$ and have codimension at most $i$.
By Proposition \ref{prop:K4_cuberelation}, we have
$|\mathcal{A}_{\ell}| \le i$ for each $\ell$.
For a cube $A \in \mathcal{A}_{\ell}$, we say that a set $S \in
\mathcal{S}_i'$ is \emph{bad} for $A$ if there are at least
$\frac{1}{(8i)^{5-\ell}}|S_{A}||S|$ blue edges between $S_{A}$ and $S$.
Otherwise, we say that $S$ is \emph{good} for $A$.
We claim that there are at most $2^{13} i^4$ sets in $\mathcal{S}'_i$
which are bad for each fixed $A$. 

Let $X_i = \bigcup_{S \in
\mathcal{S}'_i} S$ and note, by Proposition
\ref{prop:K4_preprocess}, that there are at most $|S_{A}|\cdot
2^{19} \cdot 2^{n-i}$ blue edges between the sets $S_{A}$ and $X_i$. Each set
$S \in \mathcal{S}'_i$ which is bad for $A$ accounts for at
least 
\[\frac{1}{(8i)^{5-\ell}}|S_{A}||S| \ge \frac{2^{18} \cdot 2^{n-i}}{(8i)^4}|S_{A}| = \frac{2^{6} \cdot 2^{n-i}}{i^4}|S_{A}|\] 
such blue edges (note that $|S| = 2^{18} \cdot 2^{n-i}$). Therefore, in
total, there are at most $2^{13} \cdot i^4$ sets in $\mathcal{S}'_i$ which are
in bad relation with $A$, as claimed above. Since
$|\mathcal{A}_{1} \cup \mathcal{A}_{2}| \le 2i$ and
$|\mathcal{S}'_i| > 2^{14} i^5$, there exists $S \in \mathcal{S}'_i$ which is
good for all cubes in $\mathcal{A}_{1} \cup \mathcal{A}_{2}$.

In order to show that $C$ satisfies (a) and (b), it suffices to
verify that $i \ge d$, since $i \ge d$ implies the fact that $C$ is
disjoint from all the other cubes in $\mathcal{C}_{1}$ (if $C$
intersects some other cube, then that cube must contain $C$ and
therefore also contains $(a_1, \ldots, a_n)$ by Proposition
\ref{prop:K3_cuberelation}). Furthermore, if this is the case,
assigning $C$ to $S$ is a proper assignment (thus we have
(c)). 

Now suppose, for the sake of contradiction, that $i < d$
and consider the time $t$ immediately after we last embedded a cube
of codimension at most $i$. At time $t$, since $(a_1, \ldots, a_n)$ was not
covered, the cubes in $\mathcal{C}$ are disjoint from $C$. Moreover,
the set of cubes of codimension at most $i$ which are adjacent to $C$ is the
same as at current time. Therefore, $C$ could have been added to the
tiling at time $t$ as well, and this contradicts the fact that we
always choose a cube of minimum codimension. Thus we have $i \ge d$,
as claimed.

\medskip

\noindent \textbf{Case 2}:  $(a_1, \ldots, a_n)$ is covered by a cube $C_1 \in \mathcal{C}_{1}$ but
not by a cube in $\mathcal{C}_{2}$.

In this case, in order to assign some cube $C \subset C_1$ containing
$(a_1, \ldots, a_n)$  to a subset of $S_{C_1}$ in $\mathcal{S}_{2}$, there are two types
of adjacency that one needs to consider. Let $C_2= (a_1, \ldots, a_{d}, *, \ldots, *)$ and temporarily consider it as a level $2$ cube. We first consider the cubes which are 1-adjacent to $C_2$ and remove
all the subsets of $S_{C_1}$ which are `bad' for these cubes. The reason we consider
cubes which are adjacent to $C_2$ instead of $C$ is because we do not
know what $C$ will be at this point. Note that, depending on
the choice of $C$, some of the cubes that are 1-adjacent to $C_2$
may not be 1-adjacent to $C$.
However, since we choose $C \subset C_2$ (as we will show later in the proof), 
the set of cubes which are 1-adjacent to $C$ will be a
subset of the set of cubes which are 1-adjacent to $C_2$. 
Moreover, if $C'$ is a cube 1-adjacent to both $C$ and $C_2$, 
then $\delta(C',C)=\delta(C',C_2)$.
Having removed all `bad' subsets, we have enough information to determine $C$.
Then, by considering the relation of the remaining subsets of $S_{C_1}$
to cubes which are 2-adjacent to $C$, we can find a set
$S_C$ that can be assigned to $C$. Note that 
unlike in the previous case,
even if $C'$ is a cube 2-adjacent to both $C$ and $C_2$,
we do not necessarily have $\delta(C',C)=\delta(C',C_2)$.
For this reason, it turns out to be crucial that 
we have already determined $C$ before the second step.

Let $d_1 = d(C_1)$ and let $\mathcal{A}^{(1)}$ be the subfamily of our partial embedding $\mathcal{C}$
consisting of
cubes which are $1$-adjacent to $C_2$. Note that, by Proposition \ref{prop:K4_cuberelation}(ii), there are at most $d_1$ cubes which are $1$-adjacent to $C_2$ in each level and thus $|\mathcal{A}^{(1)}| \le 2d_1$.
Let $\mathcal{F} = \{S \in \mathcal{S}_{2} : S \subset S_{C_1} \}$.
By Proposition \ref{prop:K4_preprocess},
we have $\Big| \bigcup_{S \in \mathcal{F}} S \Big| \ge \frac{|S_{C_1}|}{2}$.
We say that a set $S \in \mathcal{F}$ is \emph{bad}
for a cube $A \in \mathcal{A}^{(1)}$ if there are at least
$(8\delta_A)^{\ell(S)+\ell(S_A)-6} |S||S_A| = \frac{|S||S_A|}{(8\delta_A)^{4-\ell(A)}}$
blue edges between $S$ and $S_A$
(where $\delta_A = \max\{d_1, d_1(A)\})$. Otherwise, we say that $S$ is
\emph{good} for $A$.
For each fixed $A$, let $\mathcal{F}_A$ be the subfamily of $\mathcal{F}$
consisting of sets which are bad for $A$.
By the properness of the assignment up to this point, we know that there are
at most $\frac{|S_{C_1}||S_A|}{(8\delta_A)^{5-\ell(A)}}$ blue edges between
$S_{C_1}$ and $S_A$ for every $A \in \mathcal{A}^{(1)}$. Therefore, by
counting the number of blue edges between $S_{C_1}$ and $S_A$ in two ways,
we see that
\[ \sum_{S \in \mathcal{F}_A} \frac{|S||S_A|}{(8\delta_A)^{4-\ell(A)}}
    \le \frac{|S_{C_1}||S_A|}{(8\delta_A)^{5-\ell(A)}}, \]
from which we have $\sum_{S \in \mathcal{F}_A} |S| \le \frac{1}{8\delta_A}|S_{C_1}| \le \frac{1}{8d_1}|S_{C_1}|$.

Let $\mathcal{F}''$ be the subfamily of $\mathcal{F}$ of sets which are already
assigned to some cube in $\mathcal{C}_{2}$. There are no sets of
relative codimension zero in $\mathcal{F}''$ since this would imply
that $(a_1, \ldots, a_n)$ is covered by a cube in
$\mathcal{C}_{2}$. It thus follows that
for every $C \in \mathcal{C}_{2}$ such that $C \subset C_1$, we have
$|S_C| = 8 |C|$ and
\[ \Big| \bigcup_{S \in \mathcal{F}''} S \Big| = \sum_{S \in \mathcal{F}''} |S| \leq 8 \sum_{C\subset C_1}|C| \leq 
8|C_1| = 8 \cdot 2^{n-d_1}.  \]
Let $\mathcal{S}' = \mathcal{F} \setminus (\mathcal{F}'' \cup \bigcup_{A \in \mathcal{A}^{(1)}}\mathcal{F}_A)$
be the subfamily of $\mathcal{F}=\{S \in \mathcal{S}_2 : S \subset S_{C_1}\}$
of sets which are not assigned to any cubes yet and are good for
all the cubes in $\mathcal{A}^{(1)}$. Since $|S_{C_1}| \ge 2^{18}|C_1| = 2^{18} \cdot 2^{n-d_1}$,
we have
\begin{align*}
   \Big| \bigcup_{S \in \mathcal{S}'} S \Big|
   &\ge \Big| \bigcup_{S \in \mathcal{F}} S \Big| - \Big| \bigcup_{S \in \mathcal{F}''} S \Big| - \sum_{A \in \mathcal{A}^{(1)}} \Big| \bigcup_{S \in \mathcal{F}_A} S \Big| \\
   &\ge \frac{|S_{C_1}|}{2} - 8 \cdot 2^{n-d_1} - 2d_1 \cdot \frac{|S_{C_1}|}{8d_1} > 2^{15} \cdot 2^{n - d_1}.
\end{align*}
For each $i \ge 0$, let $\mathcal{S}'_i = \{S \in \mathcal{S}' : d_2(S) = i\} = \{S \in \mathcal{S}' : d(S) = d_1 + i \}$. Suppose that $|\mathcal{S}'_i| \le 128 i^3$ for all $i$. Then, since
we have $|S| = 8\cdot 2^{n-d_1 -i}$ for all $S \in \mathcal{S}'_i$,
\begin{align*}
\Big| \bigcup_{S\in \mathcal{S}'} S \Big| 
   = \sum_{i} \sum_{S\in \mathcal{S}_i'} |S|
   &\le \sum_{i} 128 i^3 \cdot 8 \cdot 2^{n-d_1-i}
   < 2^{15} \cdot 2^{n-d_1},
\end{align*}
which is a contradiction. Therefore, there exists an index $i$ for which $|\mathcal{S}'_i| > 128 \cdot i^3$.

Let $C = (a_1, \ldots, a_{d_1+i}, *, \ldots, *)$ and consider it as a level 2 cube.
By Proposition \ref{prop:K4_cuberelation}(ii),
there are at most $i$ cubes in $\mathcal{C}$
which have level 2 adjacency with $C$ and have codimension at most $d_1+i$. Let
$\mathcal{A}^{(2)}$ be the family consisting of these cubes.
For a cube $A \in \mathcal{A}^{(2)}$, we say that a set $S \in
\mathcal{S}_i'$ is \emph{bad} for $A$ if there are at least
$(8i)^{\ell(S_A) + \ell(S) - 6} |S_A||S| = \frac{1}{(8i)^2}|S_{A}||S|$ blue edges between $S_{A}$ and $S$.
Otherwise, we say that $S$ is \emph{good} for $A$.
We claim that there are at most $128i^2$ sets in $\mathcal{S}'_i$
which are bad for each fixed $A$. 

Let $X_i = \bigcup_{S \in
\mathcal{S}'_i} S$ and note, by Proposition
\ref{prop:K4_preprocess}, that there are at most $|S_{A}|\cdot
16 \cdot 2^{n-i}$ blue edges between the sets $S_{A}$ and $X_i$. Each set
$S \in \mathcal{S}'_i$ which is bad for $A$ accounts for at
least 
\[\frac{1}{(8i)^2}|S_{A}||S| = \frac{8 \cdot 2^{n-i}}{64i^2}|S_{A}| = \frac{2^{n-i}}{8i^2}|S_{A}|\]
such blue edges (note that $|S| = 8 \cdot 2^{n-i}$). Therefore, in
total, there are at most $128i^2$ sets in $\mathcal{S}'_i$ which are
bad for $A$, as claimed above. Since there are at most $i$ sets
in $\mathcal{A}^{(2)}$ and $|\mathcal{S}'_i| > 128i^3$, there exists a set $S \in \mathcal{S}'_i$ which is good for
all the cubes $A \in \mathcal{A}^{(2)}$.

In order to show that $C$ satisfies (a) and (b), it suffices to
verify that $d_1 + i \ge d$, since this implies the fact that $C$ is
disjoint from all the other cubes in $\mathcal{C}_{2}$ (note that
$C \subset C_1$ and if $C$
intersects some other cube of level 2, then that cube must contain $C$ and
therefore also contains $(a_1, \ldots, a_n)$ by Proposition
\ref{prop:K3_cuberelation}). Furthermore, if this is the case,
assigning $C$ to $S$ is a proper assignment (thus we have
(c)). 

Now suppose, for the sake of contradiction, that $d_1 + i < d$
and consider the time $t$ immediately after we last embedded a cube
of codimension at most $d_1 + i$. At time $t$, since $d_1 + i \ge d_1 = d(C_1)$, the cube
$C_1$ was already embedded and, since there are no
cubes of level 2 covering $(a_1, \ldots, a_n)$,
the cubes in $\mathcal{C}_{2}$ are disjoint from $C$. A cube in the partial embedding at time $t$, which is of codimension at most $d_1+i$, is $1$-adjacent to $C$ if and only if it is $1$-adjacent to $C_2$. Hence, the family of cubes which are adjacent to $C$ at time $t$ is a subfamily of $\mathcal{A}^{(1)} \cup \mathcal{A}^{(2)}$.
Therefore, $C$ could have been added to the tiling at time $t$ as well, contradicting the fact that we
always choose a cube of minimum codimension. Thus we have $d_1 + i \ge d$,
as claimed.
\end{proof}

Note that as an outcome of our algorithm, we obtain a tiling
$\mathcal{C}$ such that for every pair of adjacent cubes
$C, C' \in \mathcal{C}$, we have control on the number of blue edges between $S_C$ and
$S_{C'}$ (as given in the definition of proper assignment).

\subsection{Imposing a maximum degree condition} \label{sec:K4_maxdegree}

As in the triangle case, we now
impose certain maximum degree conditions between the sets $S_C$ for
$C \in \mathcal{C}$. It suffices to impose maximum degree conditions
between sets of level 2.

For a set $C \in \mathcal{C}_{2}$ of codimension $d = d(C)$,
and relative codimensions $d_1 = d_1(C), d_2 = d_2(C)$, recall
that we have a set $S_C \in \mathcal{S}$ such that $|S_C| \ge 8 \cdot 2^{n-d}$.
Let $\mathcal{A}_\rho$ be the family of cubes in $\mathcal{C}_{2}$ with
codimension at most $d$
which have level $\rho$ adjacency with $C$. For each $A \in \mathcal{A}_\rho$,
let $\delta_{A,\rho} = \max\{d_\rho, d_\rho(A)\}$.
By Proposition
\ref{prop:K4_cuberelation}(ii), we have $|\mathcal{A}_\rho| \le d_\rho$
for each $\rho=1,2$. For each $A \in \mathcal{A}_\rho$, there are
at most $\frac{1}{64\delta_{A,\rho}^2}|S_C||S_{A}|$ blue edges between $S_C$ and
$S_{A}$.

Now for $\rho=1,2$ and each $A \in \mathcal{A}_\rho$, remove
all the vertices in $S_C$ which have at
least $\frac{1}{8\delta_{A,\rho}}|S_{A}|$ blue neighbors in $S_{A}$ and let
$T_C$ be the subset of $S_C$ left after these removals. Since there
are at most $\frac{1}{64\delta_{A,\rho}^2}|S_C||S_{A}|$ blue edges between $S_C$
and $S_{A}$, we remove at most $\frac{|S_C|}{8\delta_{A,\rho}}$ vertices from $S_C$
for each set $A \in \mathcal{A}_\rho$. Thus the resulting set
$T_C$ is of size at least
\[ |T_C| \ge |S_C| - d_1 \cdot \frac{|S_C|}{8\delta_{A,1}} - d_2 \cdot \frac{|S_C|}{8\delta_{A,2}} \ge \frac{|S_C|}{2} \ge 4 \cdot 2^{n-d}. \]

For each $A \in \mathcal{A}_\rho$, all the vertices
in $T_C$ have blue degree at most $\frac{1}{8\delta_{A,\rho}}|S_{A}| \le
\frac{1}{4\delta_{A,\rho}}|T_{A}|$ in the set $T_{A}$. Thus we obtain the
following property.

\begin{quote}
\textbf{Maximum degree condition.} Let $C, C' \in \mathcal{C}_{2}$ be
a pair of cubes having level $\rho$ adjacency with $d(C) \ge d(C')$.
Then every
vertex in $T_C$ has at most $\frac{1}{4\delta_{\rho}}|T_{C'}|$ blue
neighbors in the set $T_{C'}$ (where $\delta_{\rho} = \max\{d_\rho(C), d_\rho(C')\}$).
\end{quote}

\subsection{Embedding the cube}

We now show how to embed $Q_n$. Recall that we found a double
tiling $\mathcal{C}$ of $Q_n$.
We will greedily embed the cubes in the level 2
tiling $\mathcal{C}_{2}$ one by one into their assigned sets from the family $\mathcal{S}_2$, in
decreasing order of their codimensions. If there are several cubes
of the same codimension, then we arbitrary choose the order between
them. 

Suppose that we are about to embed the cube $C \in
\mathcal{C}_{2}$. Let $d = d(C)$, $d_1 = d_1(C)$ and
$d_2 = d_2(C)$. We will greedily embed the vertices of $C$ into the set $T_C \subseteq S_C$. Suppose that we are about to embed $x \in C$ and let $f :
Q_n \rightarrow [N]$ denote the partial embedding of the cube $Q_n$
obtained so far. For $\rho = 1,2$, let $A_\rho$ be the set of
neighbors of $x$ which are already embedded and belong to a cube
other than $C$ that has level $\rho$ adjacency with $C$. Note that we have $|A_\rho| \le d_\rho$ for $\rho=1$ and $2$.
Since we have so far only embedded cubes of codimension at
least $d$, we have, for each $\rho=1,2$, that the vertices $f(v)$ for $v \in A_\rho$ have blue degree at most
$\frac{1}{4d_\rho}|T_C|$ in the set $T_C$, by the maximum degree
condition imposed in Section \ref{sec:K4_maxdegree}. Together, these
neighbors forbid at most
\[ d_1 \cdot \frac{1}{4d_1}|T_C| + d_2 \cdot \frac{1}{4d_2}|T_C| \le \frac{1}{2}|T_C|. \]
vertices of $T_C$ from being the image of $x$.

In addition, $x$ has at most $n-d$ neighbors which are already embedded and
belong to $C$. By Proposition \ref{prop:K4_preprocess}, for each such
vertex $v$, $f(v)$ has blue degree at most $\frac{2^{n-d}}{n}$ in the set
$T_C$. Together, these neighbors forbid at most
$2^{n-d}$ vertices of $T_C$ from being the image of $x$.
Finally, there are at most $2^{n-d}-1$ vertices in $T_C$ which are
images of some other vertex of $C$ that is already embedded. Therefore, the
number of vertices in $T_C$ into which we cannot embed $x$ is at most
\[ \frac{1}{2}|T_C| + 2^{n-d} + (2^{n-d}-1) < |T_C|, \]
where the inequality follows since $|T_C| \ge 4 \cdot 2^{n-d}$. Hence,
there exists a vertex in $T_C$ which we can choose as an image of
$x$ to extend the current partial embedding of the cube. Repeating
this procedure until we embed the whole cube $Q_n$
completes the proof.

\section{General case} \label{sec:general}

In this section, we further extend the arguments presented so far to prove the main theorem. 
The framework is very similar to that of the previous sections,
where we find multiple levels of tiling of $Q_n$. 
We begin by preprocessing the coloring to find families of sets $\mathcal{S} = \mathcal{S}_0 \cup
\mathcal{S}_1 \cup \ldots \cup \mathcal{S}_{s-2}$ such that, for $\ell \ge 1$, sets in $\mathcal{S}_{\ell}$
are subsets of sets in $\mathcal{S}_{\ell-1}$ which
do not contain blue $K_{s-\ell}$ (except for some special cases).
We refer to the sets in $\mathcal{S}_\ell$ as \emph{level $\ell$} sets and, for a 
set $S \in \mathcal{S}_\ell$, we let its \emph{level} be $\ell(S) = \ell$.
Note that, unlike in the cases where $s=3,4$, we have an additional level, level zero.
Level zero will consist of a single set $[N]$ and it is there merely for technical reasons.

We then seek a corresponding (multiple level) tiling $\mathcal{C}$ of the cube $Q_n$. 
Define an \emph{$(s-1)$-fold tiling} (or \emph{$(s-1)$-tiling}, for short) 
of $Q_n$ as a collection of $s-1$ tilings 
$\mathcal{C}_0 \cup \mathcal{C}_1 \cup \ldots \cup \mathcal{C}_{s-2}$,
where $\mathcal{C}_0$ is the trivial tiling consisting of the unique cube $Q_n$, and,
for all $\ell \ge 1$, $\mathcal{C}_{\ell}$ is a refined tiling of $\mathcal{C}_{\ell-1}$ (i.e.
for every $C \in \mathcal{C}_{\ell}$, there exists $C' \in \mathcal{C}_{\ell-1}$ such that
$C \subset C'$). We refer to cubes in $\mathcal{C}_\ell$ as \emph{level $\ell$ cubes} and,
for a cube $C \in \mathcal{C}_\ell$, define its \emph{level} as $\ell(C) = \ell$.
We will construct the tiling by finding cubes $C \in \mathcal{C}$ and assigning each of them
to some set $S_C \in \mathcal{S}$. Informally, this means that the subcube $C$ of $Q_n$
will be found in the $S_C$ part of our graph. Note that the trivial level zero cube $Q_n$ 
gets assigned to the trivial level zero set $[N]$ and this fits the heuristic.

For the rest of this section, we assume that $s \geq 5$.
Let $c = s^{15s}$ and suppose that $N \ge c^s 2^n = s^{15s^2} 2^{n}$.
We will later use the following estimate.

\begin{lemma} \label{lem:boundsum}
For every positive integer $s$, $\sum_{i=1}^{\infty} \frac{i^s}{2^i} \le 2s^{s}$.
\end{lemma}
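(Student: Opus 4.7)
The plan is to derive the exact identity $\sum_{i=1}^\infty i^s / 2^i = 2 a(s)$, where $a(s) := \sum_{k=1}^s k!\, S(s,k)$ is the Fubini (ordered Bell) number counting ordered set partitions of $[s]$, and then bound $a(s) \le s^s$ by a direct injection into functions $[s] \to [s]$.

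For the identity, I will expand $i^s$ in falling factorials via the Stirling numbers of the second kind, writing $i^s = \sum_{k=1}^s k!\, S(s,k) \binom{i}{k}$ for $s \ge 1$. Substituting and interchanging the summations yields
\[
\sum_{i=1}^\infty \frac{i^s}{2^i} \;=\; \sum_{k=1}^s k!\, S(s,k) \sum_{i=k}^\infty \frac{\binom{i}{k}}{2^i},
\]
where the inner sum evaluates to $2$ by the standard generating-function identity $\sum_{i \ge k} \binom{i}{k} x^i = x^k (1-x)^{-k-1}$ at $x = 1/2$. Hence the target sum is $2 \sum_{k=1}^s k!\, S(s,k) = 2 a(s)$.

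For the bound $a(s) \le s^s$, I observe that each ordered partition $(B_1, \ldots, B_k)$ of $[s]$ determines a function $f : [s] \to [s]$ via $f(i) = j$ whenever $i \in B_j$, and the partition can be uniquely recovered from $f$ as the family of preimages ordered by their image labels. Distinct ordered partitions therefore give distinct functions, so $a(s) \le |\{f : [s] \to [s]\}| = s^s$. Combining the two steps, $\sum_{i=1}^\infty i^s / 2^i = 2 a(s) \le 2 s^s$, as claimed.

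The main task is just to verify the combinatorial identity for the sum; once this is in hand, the injection giving $a(s) \le s^s$ is immediate. The resulting bound is clean and uniform across all positive integers $s$, although exponentially loose for large $s$, where the true order of magnitude is $s! / (\ln 2)^{s+1}$.
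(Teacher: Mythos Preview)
Your proof is correct and essentially identical to the paper's own argument: both expand $i^s$ via Stirling numbers of the second kind, evaluate the inner sum using the generating function $(1-z)^{-(k+1)}$ at $z=1/2$ to obtain the exact value $2\sum_{k} k!\,S(s,k)$, and then bound this ordered Bell number by $s^s$ via the same injection into functions $[s]\to[s]$. The only cosmetic difference is that the paper works with the falling factorial $(i)_k$ directly rather than with $k!\binom{i}{k}$.
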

\begin{proof}
Let $(x)_t=x(x-1)\ldots (x-t+1)$, $X_t = \sum_{i=1}^{\infty} \frac{i^t}{2^i}$ and $Y_t = \sum_{i=1}^{\infty} \frac{(i)_t}{2^i}$
for non-negative integers $t$. The {\it Stirling number $S(t,k)$ of the second kind} is the number of ways to partition a set of $t$ objects into $k$ non-empty subsets. These numbers satisfy the following well-known identity $x^t=\sum_{k=0}^t S(t,k)(x)_k$ (see, e.g., \cite{St02}, Chapter 1.4). This implies the identity $$X_t=\sum_{k=0}^t S(t,k)Y_k.$$ 

By taking the derivative $k$ times of both sides of the equality $(1-z)^{-1}=\sum_{i\geq 0} z^i$, note that
$$k!\cdot(1-z)^{-(k+1)}=\sum_{i\geq 1} i(i-1)\ldots(i-k+1)z^{i-k}.$$ By multiplying both sides by $z^k$ and
substituting $z=1/2$ we have that $Y_k=2k!$. This, together with the above identity, implies that $X_t= \sum_{k=0}^t 2k! S(t,k)$. Let $T_s$ be the number of partitions of a set of $s$ objects into labelled nonempty subsets. By counting over the size $k$ of the partition, we have the identity $T_s=\sum_{k=0}^s k!S(s,k)=X_s/2$. As each partition counted by $T_s$ is determined by the vector of labels of the sets containing each object, and there are at most $s$ such labels for each partition, we have $T_s \leq s^s$ and the desired inequality follows.  

(Although it will be not be needed, we remark that there is an explicit formula 
\[ X_t=2\sum_{k=0}^t\sum_{j=0}^k (-1)^{k-j}{k \choose j} j^t \] 
which follows from substituting in the well-known identity 
$S(t,k)=\frac{1}{k!}\sum_{j=0}^k (-1)^{k-j}{k \choose j} j^t$.)
\end{proof}

\subsection{Preprocessing the coloring} \label{sec:Ks_preprocess}

Let $\mathcal{S}_{0} = \{[N]\}$ and $[N]$ be the unique set of level zero
and codimension zero (denoted as $\ell(S) = 0$ and $d(S) = 0$).
We construct the levels one at a time. Once we finish constructing
$\mathcal{S}_{\ell-1}$, for each set $S' \in \mathcal{S}_{\ell-1}$, we use the
following procedure to construct sets belonging to the $\ell$-th level $\mathcal{S}_{\ell}$:

\begin{quote}
1. If $S'$ is not exceptional:
for each $d=0, 1, 2, \ldots, \log n + s\log c$,
if there exists a set $S \subset S'$ which induces a blue $K_{s-\ell}$-free graph of order
exactly $c^{s-\ell} \cdot 2^{n-d(S')-d}$, then arbitrarily choose one, add it to
the family $\mathcal{S}_{\ell}$ and remove the vertices of $S$ from
$S'$. We define the \emph{$\ell$-codimension} $d_\ell(S)$ of such a set as $d_\ell(S)=d$
and, for $i=1,\ldots,\ell-1$, we define the \emph{$i$-codimension} $d_i(S)$ as
$d_i(S) = d_i(S')$. Let the \emph{codimension} of $S$ be $d(S) = \sum_{i=1}^{\ell} d_i(S)$.
When there are no more such sets, continue to the next
value of $d$. If, after running through all values of $d$,
$\sum_{S \in \mathcal{S}_\ell, S \subset S'} |S| < \frac{|S'|}{2}$, then add the set of
remaining vertices to $\mathcal{S}_\ell$ and declare it to be an \emph{exceptional set}
with $\ell$-codimension zero (note that this set has size at least 
$\frac{|S'|}{2}=\frac{c^{s-(\ell-1)}\cdot 2^{n-d(S')}}{2} \ge c^{s-\ell}\cdot 2^{n-d(S')}$).

2. If $S'$ is exceptional: add $S'$ to the family $\mathcal{S}_{\ell}$ and define
its $\ell$-codimension as zero.
\end{quote}

Let $\mathcal{S} = \bigcup_{\ell=0}^{s-2} \mathcal{S}_{\ell}$ (we suppose that $\mathcal{S}$ is a multi-family
and if a set appears multiple times we distinguish them by their levels).
The following proposition is similar to Proposition \ref{prop:K4_preprocess}. We omit its proof.

\begin{proposition} \label{prop:Ks_preprocess}
\begin{enumerate}[(i)]
  \setlength{\itemsep}{1pt} \setlength{\parskip}{0pt}
  \setlength{\parsep}{0pt}
\item For $1 \le \ell \le s-2$ and a set $S' \in \mathcal{S}_{\ell-1}$, $\sum_{S \in \mathcal{S}_{\ell}, S \subset S'} |S| \ge \frac{|S'|}{2}$.
\item For integers $1 \le \ell \le s-2$ and $i \ge 1$, let $S'$ be a set of level $\ell-1$ and let
\[ X = \bigcup_{S \in \mathcal{S}_{\ell}, S \subset S', d_\ell(S) \ge i}S. \]
Then each vertex $v \in S'$ has at most $2c^{s-\ell} \cdot 2^{n-d(S')-i}$ blue neighbors in $X$.
\item For every set $S \in \mathcal{S}_{s-2}$, the subgraph induced by $S$ has maximum blue degree at most $\frac{2^{n-d(S)}}{n}$.
\end{enumerate}
\end{proposition}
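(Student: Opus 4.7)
The plan is to follow the template of Proposition~\ref{prop:K4_preprocess}, threading the bookkeeping through all $s-2$ levels. For part (i), I would read the bound off the algorithm directly: if $S'$ is exceptional, then Step~2 places $S'$ itself into $\mathcal{S}_{\ell}$ and the sum equals $|S'|$, while if $S'$ is non-exceptional, then Step~1 either accumulates at least $|S'|/2$ via the peeled blue $K_{s-\ell}$-free sets, or its halting clause appends the remainder as a single exceptional set of size strictly greater than $|S'|/2$. Either way the inequality holds.

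For part (ii), the plan is identical to that of Propositions~\ref{prop:K3_preprocess}(i) and \ref{prop:K4_preprocess}(ii)--(iii). If $S'$ is exceptional then $X$ is empty and there is nothing to show. Otherwise, either $\ell=1$ (and the whole coloring is blue $K_s$-free by hypothesis) or $\ell\ge 2$ and $S'$, having been introduced by Step~1 at level $\ell-1$, is blue $K_{s-\ell+1}$-free. In both cases, the blue neighbors of any $v\in S'$ lying in $S'$ induce a blue $K_{s-\ell}$-free subgraph, since otherwise $v$ together with such a copy would form a forbidden blue $K_{s-\ell+1}$. Now $X$ is contained in the leftover of $S'$ right after iteration $d=i-1$ of Step~1; by the halting of that round, the leftover admits no blue $K_{s-\ell}$-free subset of size $c^{s-\ell}\cdot 2^{n-d(S')-(i-1)}=2c^{s-\ell}\cdot 2^{n-d(S')-i}$, since any such subset would have been captured. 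Applying this to the blue neighborhood of $v$ inside $X$ yields the claim.

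For part (iii), I would mimic Proposition~\ref{prop:K4_preprocess}(iv). If $S\in\mathcal{S}_{s-2}$ is non-exceptional then by construction $S$ is blue $K_2$-free, hence a red clique, and blue degrees inside $S$ vanish. Otherwise let $\ell_0$ be the smallest level at which $S$ was declared exceptional and let $S'$ be its parent at level $\ell_0-1$, so $S\subseteq S'$, $d(S)=d(S')$, and $S'$ is non-exceptional (with $S'=[N]$ when $\ell_0=1$). Exactly as in part (ii), the blue neighborhood of any $v\in S$ inside $S$ is blue $K_{s-\ell_0}$-free. By the halting of Step~1 at level $\ell_0$ after the final round $d=\log n+s\log c$, no blue $K_{s-\ell_0}$-free subset of $S$ has size $c^{s-\ell_0}\cdot 2^{n-d(S')-(\log n+s\log c)}=2^{n-d(S)}/(nc^{\ell_0})\le 2^{n-d(S)}/n$. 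The main obstacle is not any single deep step but rather the careful propagation of the no-blue-$K_s$ hypothesis through the tower of levels: one needs to verify at each $\ell$ that the parent is blue $K_{s-\ell+1}$-free (so that its vertices have blue $K_{s-\ell}$-free neighborhoods) and that the codimensions of an exceptional set are inherited correctly from its ancestors.
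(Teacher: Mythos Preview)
Your proposal is correct and follows exactly the approach the paper has in mind: the paper omits the proof entirely, saying only that it is ``similar to Proposition~\ref{prop:K4_preprocess}'', and your argument is precisely the natural generalization of that proof (and of Proposition~\ref{prop:K3_preprocess}) to $s-2$ levels. The bookkeeping you flag---checking that a non-exceptional $S'\in\mathcal{S}_{\ell-1}$ is blue $K_{s-\ell+1}$-free (with $S'=[N]$ when $\ell=1$), that exceptional sets propagate with $\ell$-codimension zero so that $X=\varnothing$ in that case, and that the codimension computation $c^{s-\ell_0}\cdot 2^{-(\log n+s\log c)}=1/(nc^{\ell_0})$ gives the required bound in (iii)---is exactly what is needed and is handled correctly.
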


\subsection{Tiling the cube} \label{sec:Ks_tiling}

In this subsection, we find an $(s-1)$-tiling of $Q_n$.
Recall that in the previous section, we had to control the blue edge densities
between adjacent cubes in the tiling. The parameter that governed the
control of these densities was defined in terms of the $\rho$-codimension,
where $\rho$ was the level of adjacency of these cubes. Below we
generalize this concept.

\begin{definition}
Let $\mathcal{C}$ be an $(s-1)$-tiling and let $C, C'\in \mathcal{C}$ be
two adjacent cubes.
\begin{enumerate}[(i)]
  \setlength{\itemsep}{1pt} \setlength{\parskip}{0pt}
  \setlength{\parsep}{0pt}
\item The \emph{level of adjacency} $\rho(C,C')$ is the minimum $\ell$ such that
the cubes of level $\ell$ containing $C$ and $C'$ are distinct. We say that $C$ and
$C'$ are $\rho$-adjacent if $\rho(C,C') = \rho$.
\item The \emph{dominating parameter} $\delta(C,C')$ is $\max\{d_\rho(C), d_\rho(C')\}$,
where $\rho = \rho(C,C')$.
\end{enumerate}
\end{definition}

Note that the level of adjacency of two cubes $C$ and $C'$ is at most $\min\{\ell(C), \ell(C')\}$.
The following proposition is similar to Proposition \ref{prop:K4_cuberelation} and we omit its proof.

\begin{proposition} \label{prop:Ks_cuberelation}
Let $\mathcal{C}$ be an $(s-1)$-tiling and let $C \in \mathcal{C}$ be a
level $\ell$ special cube of codimension $d$.
For each $\rho=1,2,\ldots,\ell$ and $\ell'=1,2,\ldots,s-2$, the number
of special cubes of level $\ell'$ and codimension at most $d$ which
are $\rho$-adjacent to $C$ is at most $d_\rho(C)$.
\end{proposition}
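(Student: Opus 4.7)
The plan is to follow the template of Propositions \ref{prop:K3_cuberelation}(ii)--(iii) and \ref{prop:K4_cuberelation}, reducing the count to a prefix-flip argument at level $\rho$. The key observation is that $\rho$-adjacency localizes the structural difference between $C$ and $C'$ to the level-$\rho$ refinement step inside the common level-$(\rho-1)$ cube. Once localized, the classical nesting argument from the triangle case applies. Note also that if $\ell' < \rho$ there is nothing to prove, since $\rho(C,C') \le \min\{\ell(C),\ell(C')\}$ forces $\ell' \ge \rho$ for any $\rho$-adjacent pair.

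In more detail, suppose $C' \in \mathcal{C}_{\ell'}$ is $\rho$-adjacent to $C$ with $d(C') \le d$. Let $C^{(\rho-1)}, C^{(\rho)}$ (resp. $C'^{(\rho-1)}, C'^{(\rho)}$) be the level-$(\rho-1)$ and level-$\rho$ cubes of the tiling containing $C$ (resp. $C'$). By the definition of level of adjacency, $C'^{(\rho-1)} = C^{(\rho-1)}$ while $C'^{(\rho)} \ne C^{(\rho)}$. Since $C$ and $C'$ are adjacent in $Q_n$, the containing cubes $C^{(\rho)}$ and $C'^{(\rho)}$ are disjoint members of $\mathcal{C}_\rho$ that are themselves adjacent. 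Viewed as special cubes inside the ambient cube $C^{(\rho-1)}$, Proposition \ref{prop:K3_cuberelation}(ii) forces their prefixes to differ at exactly one coordinate $j$ lying among the level-$\rho$ positions, i.e., in the range $d(C^{(\rho-1)}) + 1, \ldots, d(C^{(\rho-1)}) + \min\{d_\rho(C), d_\rho(C')\}$. Thus $j$ can take at most $d_\rho(C)$ distinct values.

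The remaining step is to show that for each fixed admissible $j$, at most one level-$\ell'$ cube $C' \in \mathcal{C}_{\ell'}$ with codimension at most $d$ can realize this flip. For such a $C'$, the common prefix length is $\min\{d(C), d(C')\} = d(C')$, and the two prefixes must differ only at position $j$. Hence the length-$d(C')$ prefix of $C'$ is fully determined: it equals $C$'s prefix with the $j$-th coordinate flipped. Two candidates with different codimensions would therefore have prefixes one of which is a prefix of the other, making one cube contained in the other; disjointness of cubes in the tiling $\mathcal{C}_{\ell'}$ then forces them to coincide. Multiplying the two bounds yields the claimed $d_\rho(C)$.

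The main bookkeeping obstacle I expect is verifying that $d_i(C') = d_i(C)$ for all $i < \rho$, which is built into the equality $C'^{(\rho-1)} = C^{(\rho-1)}$ but must be unwound carefully so that the prefix analysis above is self-consistent across all levels $1, \ldots, \ell'$. Once this is verified, the rest of the argument is a verbatim extension of the $s=4$ case, introducing no new ideas beyond the multi-level prefix structure already exploited in Proposition \ref{prop:K4_cuberelation}.
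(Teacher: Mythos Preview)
Your argument is correct and is precisely the natural extension of Propositions \ref{prop:K3_cuberelation} and \ref{prop:K4_cuberelation} that the paper has in mind; indeed, the paper omits the proof entirely, stating only that it is similar to Proposition \ref{prop:K4_cuberelation} (whose proof is also omitted). The prefix-flip localization to the level-$\rho$ coordinate block together with the nesting/disjointness uniqueness argument for each fixed flip position $j$ is exactly the intended mechanism, and your bookkeeping observation that $C'^{(\rho-1)} = C^{(\rho-1)}$ forces $d_i(C') = d_i(C)$ for $i < \rho$ is the right way to make the coordinate ranges line up.
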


Our $(s-1)$-tiling $\mathcal{C}$ of the cube $Q_n$ will be constructed in
correspondence with the family $\mathcal{S}$ constructed in Section
\ref{sec:Ks_preprocess}. We will construct $\mathcal{C}$
by finding cubes of the tiling one by one. We slightly abuse
notation and use $\mathcal{C}$ also to denote the `partial' $(s-1)$-tiling,
where only part of the cube $Q_n$ is covered.
At each step, we find
a subcube $C$ which covers some non-covered part of $Q_n$ and
assign it to some set $S_C \in \mathcal{S}$. We say that such an
assignment is \emph{proper} if the following properties hold.

\begin{quote}
{\bf Proper assignment.}

1. $\ell(C) = \ell(S_C)$ and, for $\ell = \ell(C)$, we have $d_\ell(C) = d_\ell(S_C)$.

2. If $C \subset C'$ for $C' \in \mathcal{C}$, then $S_{C} \subset S_{C'}$.

3. Suppose that $C$ is adjacent to some cube $C'$ already in the
tiling, where $C'$ is assigned to $S_{C'}$, and that
$C$ and $C'$ have level $\rho$ adjacency. Then the number
of blue edges in the bipartite graph induced by $S_C$ and $S_{C'}$
is at most $|S_C||S_{C'}| \cdot (4s^2\delta(C,C'))^{\ell(C) + \ell(C') - 2s}$.
\end{quote}

We use $S_C$ to denote the set in $\mathcal{S}$ to which $C$ is
assigned. Our algorithm for finding the tiling $\mathcal{C}$ and the
corresponding sets in $\mathcal{S}$ is as follows.

\begin{quote}
\textbf{Tiling Algorithm.} At each step, consider all possible cubes
$C$ which 
\begin{itemize}
\item[(a)] can be added to $\mathcal{C}$ to extend the partial tiling, 

\item[(b)]
have $d(C) \ge d(C')$ for all $C' \in \mathcal{C}$, and 

\item[(c)]
for which there exists a set $S \in \mathcal{S}$ which has not yet been
assigned and such that assigning $C$ to $S$ gives a proper assignment. 
\end{itemize}
Take a cube $C_0$ of minimum codimension satisfying (a), (b) and (c) and add it
to the tiling. Assign $C_0$ to the set $S_{C_0} \in \mathcal{S}$ given by (c).
\end{quote}

The following proposition shows that the algorithm will terminate successfully.

\begin{proposition} \label{prop:Ks_tiling}
If the tiling is not complete, then the algorithm always chooses a cube from
a non-empty collection.
\end{proposition}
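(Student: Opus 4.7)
The plan is to mimic the two-case argument of the $K_4$ proof, generalized to $s-1$ levels. Suppose the $(s-1)$-tiling is not complete, and fix a vertex $(a_1,\ldots,a_n) \in Q_n$ that is not covered by any level $s-2$ cube of the partial $\mathcal{C}$. Let $\ell$ be the smallest level at which no cube of $\mathcal{C}$ covers this vertex, and let $C^*$ be the (unique) level $\ell-1$ cube of $\mathcal{C}$ containing $(a_1,\ldots,a_n)$, with the convention $C^* = Q_n$ when $\ell = 1$. I will produce a level $\ell$ cube $C = (a_1, \ldots, a_{d(C^*)+i}, *, \ldots, *)$ for some $i \ge 1$, together with a set $S \in \mathcal{S}_\ell$ satisfying $S \subset S_{C^*}$, such that assigning $C$ to $S$ is proper; the eligibility conditions (a) and (b) will then follow by the standard ``time $t$'' argument used in the previous two sections.

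First I would handle all adjacency levels $\rho < \ell$. For any $C' \subset C^*$ the level-$\rho$ ancestor of $C'$ coincides with that of $C^*$, so the set of cubes $\rho$-adjacent to $C'$, together with the dominating parameter $\delta$, depends only on $C^*$ and not on the eventual choice of $C$. Hence before fixing $C$ I may remove from $\mathcal{F} := \{S \in \mathcal{S}_\ell : S \subset S_{C^*}\}$ all sets declared \emph{bad} for some such $A$, where $S$ is bad for $A$ if its blue-edge count with $S_A$ exceeds the proper-assignment threshold $|S||S_A|(4s^2\delta)^{\ell+\ell(A)-2s}$. Proposition \ref{prop:Ks_cuberelation} bounds the number of $\rho$-adjacent cubes at each level by $d_\rho(C^*)$, and the proper assignment already in force on $S_{C^*}$ bounds the blue edges between $S_{C^*}$ and each $S_A$; a double-counting argument then shows that the volume of bad sets summed over all $\rho<\ell$ and all such $A$ is at most $|S_{C^*}|/4$ once $c = s^{15s}$ is taken large enough.

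Let $\mathcal{S}'$ be the remaining sets in $\mathcal{F}$ after also deleting those already assigned to a cube in $\mathcal{C}_\ell$. By Proposition \ref{prop:Ks_preprocess}(i) the volume of $\mathcal{F}$ is at least $|S_{C^*}|/2$, and since each assigned $S_{C'}$ satisfies $|S_{C'}| = c^{s-\ell}|C'|$ with the $C'$'s disjoint inside $C^*$, the assigned volume is at most $c^{s-\ell}\cdot 2^{n-d(C^*)}$, a vanishing fraction of $|S_{C^*}| \ge c^{s-\ell+1}\cdot 2^{n-d(C^*)}$; thus $\mathcal{S}'$ has volume at least $|S_{C^*}|/5$. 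Partitioning by $\ell$-codimension, $\mathcal{S}'_i := \{S \in \mathcal{S}' : d_\ell(S) = i\}$, if $|\mathcal{S}'_i| \le \alpha i^s$ held for every $i \ge 1$ with an appropriate $\alpha = \mathrm{poly}(s)$, then Lemma \ref{lem:boundsum} would yield total volume at most $2\alpha s^s c^{s-\ell}\cdot 2^{n-d(C^*)}$, contradicting the lower bound $|S_{C^*}|/5$ when $c$ is large; so some $i \ge 1$ satisfies $|\mathcal{S}'_i| > \alpha i^s$. Fix $C$ as above. Proposition \ref{prop:Ks_cuberelation} bounds the number of cubes $\ell$-adjacent to $C$ with codimension $\le d(C^*)+i$ by $i$, and for each such $A$ Proposition \ref{prop:Ks_preprocess}(ii) bounds the blue edges from $S_A$ into $\bigcup \mathcal{S}'_i$ by $|S_A|\cdot 2c^{s-\ell}\cdot 2^{n-d(C^*)-i}$; the same double counting then gives $O(\alpha i^{s-1})$ bad sets per $A$, so strictly fewer than $|\mathcal{S}'_i|$ sets are bad in total, and a proper $S$ exists. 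Finally, $d(C^*)+i \ge d$ follows from the standard contradiction: otherwise, at the time $t$ immediately after the last cube of codimension $\le d(C^*)+i$ was embedded, $C^*$ was already in place, no level-$\ell$ cube covered $(a_1,\ldots,a_n)$, and the adjacent cubes at levels $\le \ell$ were a subset of the current ones, so $C$ would have been an eligible minimum-codimension choice then.

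The main obstacle is the bookkeeping across all $s-2$ adjacency levels simultaneously. At each level $\rho < \ell$ the inherited ``bad'' volume must contribute only an $O(1/\delta)$ fraction, while at the new level $\ell$ the density bound from Proposition \ref{prop:Ks_preprocess}(ii) must absorb the polynomial loss $i^{s-1}$ per adjacent cube. Both losses are eventually paid for by the large factor $c = s^{15s}$ built into the set sizes at each level, but verifying that the exponents in the proper-assignment weight $(4s^2\delta)^{\ell(C)+\ell(C')-2s}$ line up correctly both for edges inherited from the parent level (which effectively lose a factor of roughly $4s^2\delta$ when passing from $S_{C^*}$ to its subsets) and for edges controlled directly via Proposition \ref{prop:Ks_preprocess}(ii) is the delicate part of the argument.
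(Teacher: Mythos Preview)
Your outline is the paper's proof: first strip from $\mathcal{F}$ the sets that are bad for cubes $\rho$-adjacent (for $\rho<\ell$) using the proper-assignment bound already in force on $S_{C^*}$, then pigeonhole on the remaining volume to find a rich $\ell$-codimension slice $\mathcal{S}'_i$, and finally handle the level-$\ell$ adjacencies of $C=(a_1,\ldots,a_{d(C^*)+i},*,\ldots,*)$ via Proposition~\ref{prop:Ks_preprocess}(ii), closing with the time-$t$ contradiction. Two quantitative points in your sketch are off and would not survive the computation you flag as delicate. First, the number of cubes $\ell$-adjacent to $C$ with codimension at most $d(C^*)+i$ is at most $i$ \emph{per level} by Proposition~\ref{prop:Ks_cuberelation}, hence at most $si$ in total, not $i$. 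Second, and more importantly, the threshold $|\mathcal{S}'_i|>\alpha i^s$ with $\alpha=\mathrm{poly}(s)$ is too weak: a set $S\in\mathcal{S}'_i$ bad for some $A\in\mathcal{A}^{(\ell)}$ contributes at least $(4s^2 i)^{\ell(A)+\ell-2s}|S_A||S|$ blue edges into $S_A$, and since $\ell(A)+\ell-2s$ can be as negative as roughly $2-2s$, the bad-set count per $A$ is only bounded by something of order $(4s^2 i)^{2s}$, not $O(i^{s-1})$. The paper accordingly takes the threshold $(4s^2 i)^{2s+1}$; Lemma~\ref{lem:boundsum} and $c=s^{15s}$ then absorb the resulting $s^{O(s)}$ factor (which is not $\mathrm{poly}(s)$) in the volume comparison. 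With these two corrections your outline is exactly the paper's argument.
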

\begin{proof}
In the beginning, the algorithm will take $Q_n$ as the level 0 tiling
and will assign it to $[N]$, which is the unique set of level 0.
Now suppose that in the previous step we embedded some cube of codimension $d$.
Since the tiling is not complete, there exists a
vertex $(a_1, \ldots, a_n) \in Q_n$ which is not covered $s-1$ times.
Suppose that this vertex is covered $\ell$ times for $\ell \leq s-2$ and let
$C_0, \ldots, C_{\ell-1}$ be the cubes of each level that cover it
(note that $C_0 \supseteq C_1 \supseteq \ldots \supseteq C_{\ell-1}$). Let $C_{\ell}= (a_1, \ldots, a_{d}, *, \ldots, *)$. We temporarily consider $C_{\ell}$ as a level $\ell$ cube.

In this case, in order to assign some cube $C \subset C_{\ell-1}$ containing
$(a_1, \ldots, a_n)$ to a subset of $S_{C_{\ell-1}}$ in $\mathcal{S}_{\ell}$,
we first consider the cubes which are $\rho$-adjacent to $C_{\ell}$ for
$\rho \le \ell-1$ and remove all the subsets of $S_{C_{\ell-1}}$ which
are `bad' for these cubes. The reason we consider
cubes which are adjacent to $C_{\ell}$ instead of $C$ is because we do not
know what $C$ will be at this point. Note that, depending on
the choice of $C$, some of the cubes that are $\rho$-adjacent to $C_{\ell}$
may not be $\rho$-adjacent to $C$.
However, since we choose $C \subset C_{\ell}$ (as we will show later in the proof), the set of cubes which
are $\rho$-adjacent to $C$ will be a
subset of the set of cubes which are $\rho$-adjacent to $C_{\ell}$.
Moreover, for $\rho \le \ell-1$, if $C'$ is a cube $\rho$-adjacent to both $C$ and $C_{\ell}$, 
then $\delta(C',C)=\delta(C',C_{\ell})$.
Having removed these `bad' subsets, we have enough information to determine $C$.
Then, by considering the relation of the remaining subsets of $S_{C_{\ell-1}}$
to cubes which are $\ell$-adjacent to $C$, we can find a set
$S_C$ that can be assigned to $C$.
Note that unlike in the previous case, for $\rho \ge \ell$,
even if $C'$ is a cube $\rho$-adjacent to both $C$ and $C_{\ell}$,
we do not necessarily have $\delta(C',C)=\delta(C',C_{\ell})$. 
For this reason, it turns out to be crucial that we
have already determined $C$ before the second step.

For each $\rho \leq \ell - 1$ and $i$, let $\mathcal{A}^{(\rho)}_i$ be the family of
level $i$ cubes from our partial embedding $\mathcal{C}$
consisting of cubes which are $\rho$-adjacent to $C_{\ell}$. Let
$\mathcal{A}^{(\rho)} = \bigcup_{i} \mathcal{A}^{(\rho)}_i$. By Proposition \ref{prop:Ks_cuberelation},
we have $|\mathcal{A}^{(\rho)}_i| \le d_\rho(C_{\ell-1})$ for each $i$ and thus
$|\mathcal{A}^{(\rho)}| \le s \cdot d_\rho(C_{\ell-1})$.
Let $\mathcal{F} = \{S \in \mathcal{S}_{\ell} : S \subset S_{C_{\ell-1}} \}$.
By Proposition \ref{prop:Ks_preprocess},
we have $\Big| \bigcup_{S \in \mathcal{F}} S \Big| \ge \frac{|S_{C_{\ell-1}}|}{2}$.
We say that a set $S \in \mathcal{F}$ is \emph{bad}
for a cube $A \in \mathcal{A}^{(\rho)}_i$ if there are at least
$(4s^2\delta_A)^{\ell(S)+\ell(S_A)-2s} |S||S_A| = (4s^2\delta_A)^{\ell+i-2s} |S||S_A|$
blue edges between $S$ and $S_A$
(where $\delta_A = \delta(C_{\ell-1}, A)$ is the dominating parameter
of $C_{\ell-1}$ and $A$). Otherwise, we say that $S$ is \emph{good} for $A$.
For each fixed $A$, let $\mathcal{F}_A$ be the subfamily of $\mathcal{F}$
consisting of sets which are bad for $A$.
By the properness of the assignment up to this point, we know that there are
at most $(4s^2\delta_A)^{\ell+i-1-2s} |S_{C_{\ell-1}}||S_A|$ blue edges between
$S_{C_{\ell-1}}$ and $S_A$ for every $A \in \mathcal{A}^{(\rho)}_i$. Therefore, by
counting the number of blue edges between $S_{C_{\ell-1}}$ and $S_A$ in two ways,
we see that
\[ \sum_{S \in \mathcal{F}_A} (4s^2\delta_A)^{\ell+i-2s} |S||S_A|
    \le (4s^2\delta_A)^{\ell+i-1-2s} |S_{C_{\ell-1}}||S_A|, \]
from which we have $\sum_{S \in \mathcal{F}_A} |S| \le \frac{1}{4s^2\delta_A}|S_{C_{\ell-1}}|
\le \frac{1}{4s^2d_\rho(C_{\ell-1})}|S_{C_{\ell-1}}|$.

Let $\mathcal{F}''$ be the subfamily of $\mathcal{F}$ of sets which are already
assigned to some cube in $\mathcal{C}_{\ell}$. There can be no sets of
relative codimension zero in $\mathcal{F}''$ since this would imply
that $(a_1, \ldots, a_n)$ is covered by a cube in
$\mathcal{C}_{\ell}$. It thus follows that
for every $C \in \mathcal{C}_{\ell}$ such that $C \subset C_{\ell-1}$, we have
$|S_C| = c^{s-\ell} |C|$ and
\[ \Big| \bigcup_{S \in \mathcal{F}''} S \Big| = \sum_{S \in \mathcal{F}''} |S| \le c^{s-\ell} \cdot |C_{\ell-1}|.  \]
Let $\mathcal{S}' = \mathcal{F} \setminus (\mathcal{F}'' \cup \bigcup_\rho \bigcup_{A \in \mathcal{A}^{(\rho)}}\mathcal{F}_A)$
be the subfamily of $\mathcal{F}$ of sets which are not assigned to any cubes yet and are good for
all the cubes in $\mathcal{A}^{(\rho)}$ for all $\rho \leq \ell - 1$.
Since $|S_{C_{\ell-1}}| \ge c^{s-\ell+1}|C_{\ell-1}|$, we have
\begin{align*}
   \Big| \bigcup_{S \in \mathcal{S}'} S \Big|
   &\ge \Big| \bigcup_{S \in \mathcal{F}} S \Big| - \Big| \bigcup_{S \in \mathcal{F}''} S \Big| - \sum_{\rho} 
\sum_{A \in \mathcal{A}^{(\rho)}} \Big| \bigcup_{S \in \mathcal{F}_A} S \Big| \\
   &\ge \frac{|S_{C_{\ell-1}}|}{2} - c^{s-\ell} \cdot |C_{\ell-1}| - \sum_{\rho} s d_\rho(C_{\ell-1}) \cdot \frac{|S_{C_{\ell-1}}|}{4s^2d_\rho(C_{\ell-1})} > 
\frac{c^{s-\ell+1}}{5} \cdot |C_{\ell-1}|.
\end{align*}
For each $i \ge 0$, let $\mathcal{S}'_i = \{S \in \mathcal{S}' : d_\ell(S) = i\} = \{S \in \mathcal{S}' : d(S) = d(C_{\ell-1}) + i \}$. Suppose that $|\mathcal{S}'_i| \le (4s^2i)^{2s+1}$ for all $i$. Then, since
we have $|S| = c^{s-\ell} 2^{n-d(C_{\ell-1})-i} = c^{s-\ell} \cdot 2^{-i} |C_{\ell-1}|$ for all $S \in \mathcal{S}'_i$,
\begin{align*}
\left|\bigcup_{S \in  \mathcal{S}'} S \right|
   = \sum_{i}\sum_{S \in \mathcal{S}_i'} \left|S \right|
   &\le \sum_{i} (4s^2i)^{2s+1} \cdot c^{s-\ell} \cdot 2^{-i} |C_{\ell-1}|.
\end{align*}
By Lemma \ref{lem:boundsum}, we have 
$$\sum_{i}(4s^2i)^{2s+1} 2^{-i} =(2s)^{4s+2}\sum_{i}i^{2s+1}/ 2^i \leq (2s)^{4s+2} \cdot 2(2s+1)^{2s+1}<s^{15s}/5=c/5\,.$$
Therefore,  $\left|\bigcup_{S \in  \mathcal{S}'} S \right| < \frac{c^{s-\ell+1}}{5} \cdot |C_{\ell-1}|$,
which is a contradiction. Thus there exists an index $i$ for which $|\mathcal{S}'_i| > (4s^2i)^{2s+1}$.

Let $C = (a_1, \ldots, a_{d(C_{\ell-1})+i}, *, \ldots, *)$ and consider it as a level $\ell$ cube.
By Proposition \ref{prop:Ks_cuberelation},
there are at most $s \cdot i$ cubes in $\mathcal{C}$ with codimension at most $d(C)$
which have level $\ell$ adjacency with $C$ (remember that these may have level higher than $\ell$). Let
$\mathcal{A}^{(\ell)}$ be the family consisting of these cubes and
note that the dominating parameter of $C$ and $A$ for $A \in \mathcal{A}^{(\ell)}$
is always $i$.
For a cube $A \in \mathcal{A}^{(\ell)}$, we say that a set $S \in
\mathcal{S}_i'$ is \emph{bad} for $A$ if there are at least
$(4s^2i)^{\ell(S_A) + \ell(S) - 2s} |S_A||S| = (4s^2i)^{\ell(S_A) + \ell - 2s} |S_A||S|$
blue edges between $S_{A}$ and $S$.
Otherwise, we say that $S$ is \emph{good} for $A$. We claim that there are at most $(4s^2i)^{2s}$ sets in $\mathcal{S}'_i$ which are
bad for each fixed $A$.

Let $X_i = \bigcup_{S \in \mathcal{S}'_i} S$ and note, by Proposition
\ref{prop:Ks_preprocess}, that there are at most $|S_{A}|\cdot
2c^{s-\ell} \cdot 2^{n-d(C)}$ blue edges between the sets $S_{A}$ and $X_i$. Each set
$S \in \mathcal{S}'_i$ which is bad for $A$ accounts for at
least $(4s^2i)^{\ell(S_A) + \ell - 2s} |S_A||S| \ge
\frac{c^{s-\ell} \cdot 2^{n-d(C)}}{(4s^2i)^{2s-1}}|S_{A}|$ such blue edges (note that $|S| = c^{s-\ell} \cdot 2^{n-d(C)}$ and $\ell\geq 1$). Therefore, in
total, there are at most $(4s^2i)^{2s}$ sets in $\mathcal{S}'_i$ which are
bad for $A$, as claimed above. Since there are at most $si$ cubes
in $\mathcal{A}^{(\ell)}$ and $|\mathcal{S}'_i| > (4s^2i)^{2s+1}$, there exists a set $S \in \mathcal{S}'_i$ which is good for
all the cubes $A \in \mathcal{A}^{(\ell)}$.

In order to show that $C$ satisfies (a) and (b), it suffices to
verify that $d(C) \ge d$, since this implies the fact that $C$ is
disjoint from all the other cubes of level at least $\ell$
(note that $C \subset C_{\ell-1}$ and if $C$
intersects some other cube of level at least $\ell$, then that cube must contain $C$ and
therefore also contains $(a_1, \ldots, a_n)$ by Proposition
\ref{prop:K3_cuberelation}). Furthermore, if this is the case,
assigning $C$ to $S$ is a proper assignment (thus we have
(c)). 

Now suppose, for the sake of contradiction, that $d(C) < d$
and consider the time $t$ immediately after we last embedded a cube
of codimension at most $d(C)$. At time $t$, since $d(C) \ge d(C_{\ell-1})$, the cube
$C_{\ell-1}$ was already embedded and, since there are no
cubes of level $\ell$ covering $(a_1, \ldots, a_n)$,
the cubes in $\mathcal{C}_{\ell}$ are disjoint from $C$. For $\rho \leq \ell-1$, 
a cube in the partial tiling at time $t$, which is of codimension at most $d(C)$, 
is $\rho$-adjacent to $C$ if and only if it is 
$\rho$-adjacent to $C_{\ell}$. Hence, the family of cubes which are adjacent to $C$ at time $t$ is
a subfamily of $\bigcup_{\rho=1}^{\ell} \mathcal{A}^{(\rho)}$.
Therefore, $C$ could have been added to the
tiling at time $t$ as well, contradicting the fact that we
always choose a cube of minimum codimension. Thus we have $d(C) \ge d$,
as claimed.
\end{proof}

Note that as an outcome of our algorithm, we obtain a tiling
$\mathcal{C}$ such that for every pair of adjacent cubes
$C, C' \in \mathcal{C}$, we have control on the number of blue edges between $S_C$ and
$S_{C'}$ (as given in the definition of proper assignment).

\subsection{Imposing a maximum degree condition} \label{sec:Ks_maxdegree}

As in the previous sections, we now
impose certain maximum degree conditions between the sets $S_C$ for
$C \in \mathcal{C}_{s-2}$.
For a set $C \in \mathcal{C}_{s-2}$ of codimension $d = d(C)$
and relative codimensions $d_\ell = d_\ell(C), 1 \leq \ell \leq s-2$, recall
that we have a set $S_C \in \mathcal{S}$ such that $|S_C| \ge c^2 \cdot 2^{n-d}$.
Let $\mathcal{A}^{(\rho)}$ be the family of cubes in $\mathcal{C}_{s-2}$ with
codimension at most $d$ which have level $\rho$ adjacency with $C$.
By Proposition
\ref{prop:Ks_cuberelation}, we have $|\mathcal{A}^{(\rho)}| \le d_\rho(C)$
for each $\rho=1,\ldots,s-2$. For each $A \in \mathcal{A}^{(\rho)}$, let
$\delta_A = \delta(C,A)$ and note that $\delta_A=\max\{d_\rho(A),d_\rho(C)\}\geq d_\rho(C)$.
Also, since $\ell(A)+\ell(C)-2s = -4$, there are
at most $\frac{1}{(4s^2\delta_A)^4}|S_C||S_{A}|$ blue edges between $S_C$ and
$S_{A}$.

Now for $\rho=1,\ldots,s-2$, and each $A \in \mathcal{A}^{(\rho)}$, remove
all the vertices in $S_C$ which have at
least $\frac{1}{4s^2\delta_A}|S_{A}|$ blue neighbors in $S_{A}$ and let
$T_C$ be the subset of $S_C$ left after these removals. Since there
are at most $\frac{1}{(4s^2\delta_A)^4}|S_C||S_{A}|$ blue edges between $S_C$
and $S_{A}$, we remove at most $\frac{|S_C|}{(4s^2\delta_A)^3}$ vertices from $S_C$
for each set $A \in \mathcal{A}^{(\rho)}$. Thus the resulting set
$T_C$ is of size at least
\[ |T_C| \ge |S_C| - \sum_{\rho=1}^{s-2} d_\rho \cdot \frac{|S_C|}{(4s^2\delta_A)^3}
         \ge \frac{|S_C|}{2} \ge c \cdot 2^{n-d}. \]

For each $A \in \mathcal{A}^{(\rho)}$, all the vertices
in $T_C$ have blue degree at most $\frac{1}{4s^2\delta_{A}}|S_{A}| \le
\frac{1}{2s^2\delta_{A}}|T_{A}|$ in the set $T_{A}$. Thus we obtain the
following property.

\begin{quote}
\textbf{Maximum degree condition.} Let $C, C' \in \mathcal{C}_{s-2}$ be
a pair of cubes having level $\rho$ adjacency with $d(C) \ge d(C')$.
Then every
vertex in $T_C$ has at most $\frac{1}{2s^2 \delta(C,C')}|T_{C'}|$ blue
neighbors in the set $T_{C'}$.
\end{quote}

\subsection{Embedding the cube}

We now show how to embed $Q_n$. Recall that we found an $(s-1)$-tiling $\mathcal{C}$ of $Q_n$.
We will greedily embed the cubes in the level $s-2$
tiling $\mathcal{C}_{s-2}$ one by one into their assigned sets from the family $\mathcal{S}_{s-2}$, in
decreasing order of their codimensions. If there are several cubes
of the same codimension, then we arbitrary choose the order between
them. 

Suppose that we are about to embed the cube $C \in
\mathcal{C}_{s-2}$. Let $d = d(C)$ and $d_\ell = d_\ell(C)$, for
$\ell=1,\ldots,s-2$. We will greedily embed the vertices of $C$ into $T_C \subseteq S_C$. Suppose that we are about to embed $x \in C$ and let $f :
Q_n \rightarrow [N]$ denote the partial embedding of the cube $Q_n$
obtained so far. For each $\rho$, let $A_{\rho}$ be the set of
neighbors of $x$ which are already embedded and belong to a cube
other than $C$ that has level $\rho$ adjacency with $C$. 
Note that we have $|A_{\rho}| \le d_\rho$ for every $\rho$.
Since we have so far only embedded cubes of codimension at
least $d$, for each $\rho$, the vertices $f(v)$ for $v \in A_{\rho}$ have blue degree at most
$\frac{1}{2s^2d_\rho}|T_C|$ in the set $T_C$, by the maximum degree
condition imposed in Section \ref{sec:Ks_maxdegree}. Together, these
neighbors forbid at most
\[ \sum_{\rho=1}^{s-2} d_\rho \cdot \frac{1}{2s^2d_\rho}|T_C| \le \frac{1}{2}|T_C|. \]
vertices of $T_C$ from being the image of $x$.

In addition, $x$ has at most $n-d$ neighbors which are already embedded and
belong to $C$. By Proposition \ref{prop:Ks_preprocess}, for each such
vertex $v$, $f(v)$ has blue degree at most $\frac{2^{n-d}}{n}$ in the set
$T_C$. Together, these neighbors forbid at most
$2^{n-d}$ vertices of $T_C$ from being the image of $x$.
Finally, there are at most $2^{n-d}-1$ vertices in $T_C$ which are
images of some other vertex of $C$ that is already embedded. Therefore, the
number of vertices in $T_C$ into which we cannot embed $x$ is at most
\[ \frac{1}{2}|T_C| + 2^{n-d} + (2^{n-d}-1) < |T_C|, \]
where the inequality follows since $|T_C| \ge c \cdot 2^{n-d}$.
Hence, there exists a vertex in $T_C$ which we can choose as an image of
$x$ to extend the current partial embedding of the cube. Repeating
this procedure until we embed the whole cube $Q_n$
completes the proof.

\section{Small separators, forbidden minors and Ramsey goodness} \label{sec:separator}

Let $\mathcal{G}_H$ be the family of graphs $G$ which do not contain an $H$-minor. In order to show that this family is $s$-good, we wish to apply the following result of Nikiforov and Rousseau \cite{NR09}.

\begin{theorem} \label{NikRou}
For every $s \geq 3$, $d \geq 1$ and $0 < \gamma < 1$, there exists $\eta > 0$ such that the class $\mathcal{G}$ of $d$-degenerate graphs $G$ with a $(|V(G)|^{1-\gamma}, \eta)$-separator is $s$-good.
\end{theorem}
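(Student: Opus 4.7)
The plan is to prove this by induction on $s$. For the base case $s=3$ and the inductive step, I would fix $G \in \mathcal{G}$ on $n$ vertices with a $(n^{1-\gamma}, \eta)$-separator $T$, with $\eta = \eta(s, d, \gamma)$ to be chosen small; take a red/blue coloring of $K_N$ with $N = (n-1)(s-1)+1$ having no blue $K_s$, and aim to build a red copy of $G$.

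First I would reduce to a degree condition: if some vertex $v$ has blue-degree at least $r(G, K_{s-1}) = (n-1)(s-2)+1$ (which holds by induction for $s \geq 4$, and by a direct Ramsey bound for $s=3$), its blue neighborhood contains no blue $K_{s-1}$, so by the induction hypothesis applied to this neighborhood we find a red $G$. Hence we may assume every vertex has blue-degree at most $(n-1)(s-2)$ and therefore red-degree at least $n-1$. Combined with the fact that the blue graph is $K_s$-free (so by Tur\'an's theorem has edge density at most $1 - 1/(s-1)$ on any large subset), an iterative cleaning argument removing vertices of low red-degree produces a subset $W \subseteq V(K_N)$ of size $(1-o(1))N$ in which every vertex has red-degree to $W$ very close to $|W|/(s-1)\cdot (s-1) = |W|$ after appropriate re-weighting, and in particular large common red neighborhoods are plentiful.

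Next I would exploit the separator structure of $G$: write $V(G) = T \sqcup C_1 \sqcup \cdots \sqcup C_k$ with $|T| \leq n^{1-\gamma}$ and $|C_i| \leq \eta n$. Apply dependent random choice on $W$ to produce a subset $U \subseteq W$ of linear size with the property that every constant-size subset $S \subseteq U$ has a common red neighborhood of size at least $\eta_0 n$ for some constant $\eta_0 = \eta_0(s, d, \gamma)$. Embed $T$ greedily into $U$ one vertex at a time in the degeneracy order of $G[T]$: each new vertex has at most $d$ already-embedded neighbors in $T$, and by the dependent-random-choice property their common red neighborhood in $U$ is large, so an image can be chosen avoiding previously used vertices. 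Finally, embed each component $C_i$ greedily into the common red neighborhood $R_i$ of the images of its boundary $N_G(C_i) \cap T$; since $|C_i| \leq \eta n$ is much smaller than $|R_i|$ and $C_i$ is $d$-degenerate, each vertex of $C_i$ can be placed avoiding at most $d$ forbidden blue-degree constraints plus all previously used images, which succeeds when $\eta$ is small enough.

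The main obstacle is the tightness of the bound $(n-1)(s-1)+1$, which leaves no $\Theta(n)$ slack. The reduction to the high-red-degree regime must preserve essentially all $N$ vertices up to lower-order terms, and the dependent random choice on $W$ must yield a set $U$ of essentially full size. It is precisely here that the $d$-degeneracy and the sublinearity $|T| = O(n^{1-\gamma})$ become crucial: embedding the separator $T$ consumes only $o(n)$ vertices of $U$ and requires only $d$-fold common red neighborhoods, so that when each component $C_i$ is embedded into the common red neighborhood of its boundary, the latter still dominates $|C_i|$. The quantitative choice of $\eta$ small compared to the Tur\'an and dependent-random-choice constants, together with a careful bookkeeping to ensure disjointness of the component embeddings, is what makes the whole argument balance on the nose.
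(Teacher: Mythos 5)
First, a point of orientation: the paper does not prove this statement at all --- it is quoted verbatim from Nikiforov and Rousseau \cite{NR09} and used as a black box in the proof of Theorem \ref{forbidminor}. So there is no in-paper proof to compare against; your proposal has to stand on its own, and as written it does not. The first serious problem is the cleaning step. After the (correct) reduction via induction on $s$ to minimum red degree at least $n-1$, you claim an iterative argument yields $W$ of size $(1-o(1))N$ in which every vertex has red degree ``very close to $|W|$.'' This is false: in the extremal Chv\'atal--Harary coloring ($s-1$ red cliques of size $n-1$ with blue edges between them) every vertex has red degree about $|W|/(s-1)$, and no cleaning or ``re-weighting'' can push this toward $|W|$. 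The whole difficulty of goodness proofs is precisely that the red graph may have density only $1/(s-1)$ and that two vertices may have \emph{disjoint} red neighborhoods, so ``large common red neighborhoods are plentiful'' is not something you get for free.

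The second gap is in the embedding of the components $C_i$. Dependent random choice can give a set $U$ in which every $d$-subset has a common red neighborhood of size $\eta_0 N$, but (i) the sets $R_i$ produced this way carry no internal structure --- to greedily embed a connected $d$-degenerate graph $C_i$ of size $\eta n$ into $R_i$ you need control of common red neighborhoods of $d$-tuples \emph{inside} $R_i$, or at least a maximum blue degree condition within $R_i$, neither of which you have established; a single embedded vertex with large blue degree into $R_i$ can forbid almost all of $R_i$; and (ii) the total size of the components is $n-|T|\approx n$ while each $R_i$ has size only $\eta_0 n$ and distinct components may share the same boundary image and hence compete for the same $R_i$, so the pieces cannot collectively fit without a global volume argument that the proposal does not supply. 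The actual Nikiforov--Rousseau proof confronts exactly these issues: it locates a single set of size slightly more than $n$ that is nearly red-complete with small blue maximum degree (a structure forced by the $K_s$-freeness and the tightness of $N=(s-1)(n-1)+1$, via induction on $s$), and embeds essentially all of $G$ inside it, using the recursive separator decomposition to break $G$ into pieces small enough to dodge the few remaining blue edges. Your outline, by contrast, never produces such a host set, and the steps that are supposed to substitute for it do not hold.
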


There are two conditions here that need to be verified in order to gain the conclusion of Theorem \ref{forbidminor}. Firstly, we need to show that the graphs in $\mathcal{G}_H$ have bounded degeneracy. This was first proved by Mader \cite{M68}. Later,   Kostochka \cite{K82, K84} and Thomason \cite{T84} independently established the following bound, which is tight apart from the constant factor. More recently, the asymptotic value of $c$ was determined by Thomason \cite{T01}.

\begin{theorem} \label{KT}
There exists a constant $c > 0$ such that any graph with average degree at least $c h \sqrt{\log h}$ contains a $K_h$-minor. 
\end{theorem}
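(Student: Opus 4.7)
The plan is to follow the probabilistic extremal approach of Thomason. Let $G$ be a graph with average degree at least $d = c h\sqrt{\log h}$, and assume for contradiction that $G$ contains no $K_h$-minor. Since every graph of average degree $d$ contains a subgraph of minimum degree at least $d/2$, I may replace $G$ by such a subgraph and so assume $\delta(G) \geq d/2$.

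Next I would reduce to a minor $H$ whose order is not much larger than its minimum degree, concretely a minor on $n_0$ vertices with $n_0 \lesssim h\sqrt{\log h}$ and minimum degree $\alpha n_0$ for an absolute $\alpha > 0$. The mechanism is a density-increment argument: if there is a proper connected subset $U \subseteq V(G)$ whose contraction strictly increases the ratio $e/n$, contract $U$ and repeat; if no such $U$ exists, then $G$ itself is already in the desired quasi-random regime and yields a suitable $H$ directly. Iterating this process on progressively smaller minors terminates (since each step strictly increases $e/n$, which is bounded by $|V|-1$), and standard estimates show that the terminal minor $H$ satisfies the claimed parameters.

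Having produced such an $H$, I would extract $K_h$ as a minor via a random partition. Partition $V(H)$ uniformly at random into $h$ blocks $B_1, \ldots, B_h$, each of size $t = n_0/h \gtrsim \sqrt{\log h}$. For any pair $i \neq j$, the expected number of edges between $B_i$ and $B_j$ is at least $\alpha t^2/2$, so a Chernoff tail bound shows that the probability of the pair spanning no edge is $\exp(-\Omega(t^2)) = \exp(-\Omega(\log h))$, which is below $1/\binom{h}{2}$ once $c$ is chosen sufficiently large. A union bound then ensures that with positive probability every pair of blocks is joined by an edge. Internal connectedness of each block can be arranged at negligible cost, e.g.\ by first fixing a spanning structure on a dense subgraph of $H$ and randomizing only after securing connectivity, so the $B_i$ form the branch sets of a $K_h$-minor, contradicting our assumption on $G$.

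The main obstacle is the density-increment step: a naive greedy contraction loses density too quickly and leads only to bounds of the form $d \geq 2^{\Omega(h)}$ (a Mader-type bound). To hit the sharp order $h\sqrt{\log h}$, one has to exploit the $K_h$-minor-freeness hypothesis carefully to show that either the density jumps substantially at each contraction or else a $K_h$-minor is already forced. This is the technical heart of the Kostochka and Thomason proofs and is also what determines the asymptotically optimal constant $c$ later computed in \cite{T01}.
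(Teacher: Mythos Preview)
The paper does not prove Theorem~\ref{KT}; it is quoted as a known result of Kostochka \cite{K82, K84} and Thomason \cite{T84} and used as a black box in the proof of Theorem~\ref{forbid}. So there is no proof in the paper to compare your attempt against.

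For what it is worth, your sketch is a reasonable outline of the Thomason-style argument: pass to a dense minor of linear minimum degree via iterated contraction, then randomly partition into $h$ pieces and use concentration plus a union bound. You correctly flag that the density-increment reduction is where the real work lies and that a naive contraction only recovers the exponential Mader bound. As a self-contained proof, however, your write-up is only a plan: the reduction to a minor on $O(h\sqrt{\log h})$ vertices with linear minimum degree is asserted rather than carried out, and the handling of block connectivity in the random-partition step is hand-waved. Since the paper treats this theorem as a citation, none of this needs to be filled in here.
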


Secondly, we need to show that the graphs in $\mathcal{G}_H$ have appropriate separators. We will use the following result of Alon, Seymour and Thomas \cite{AST90}. Note that, in the particular case of planar graphs, such a separator theorem was proved much earlier by Lipton and Tarjan \cite{LT79}.

\begin{theorem} \label{AST}
Let $G$ be a graph on $n$ vertices containing no $K_h$-minor. Then $G$ has an $(h^{3/2} n^{1/2}, \frac{2}{3})$-separator. 
\end{theorem}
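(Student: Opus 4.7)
The plan is to argue by contradiction and build the $K_h$-minor directly from a failure-of-separation hypothesis. Assume that $G$ admits no $(h^{3/2} n^{1/2}, \tfrac{2}{3})$-separator. This means that for every $T \subseteq V(G)$ with $|T| \le h^{3/2} n^{1/2}$, there is a connected component of $G - T$ with more than $\tfrac{2n}{3}$ vertices. Writing $\beta(T)$ for (a canonical choice of) such a large component, $\beta$ is monotone in the sense that $\beta(T') \subseteq \beta(T)$ whenever $T \subseteq T'$, provided we break ties consistently. This structure is exactly a \emph{haven} of order $h^{3/2} n^{1/2} + 1$, and the task reduces to showing that a haven of this order forces a $K_h$-minor.

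To extract the minor, I would grow $h$ connected branch sets $B_1, \dots, B_h$ together with connector paths $P_{ij}$ between $B_i$ and $B_j$, all pairwise disjoint and all living inside the current haven component. Proceeding in $h$ stages, at stage $k+1$ let $W = \bigcup_i B_i \cup \bigcup_{i<j \le k} P_{ij}$ be everything used so far, and consider $\beta(W)$: since $|W|$ will be kept well below $h^{3/2} n^{1/2}$ throughout the construction, $\beta(W)$ is nonempty and large. Pick any $v \in \beta(W)$ and grow $B_{k+1}$ starting at $v$; then build $k$ new connector paths from $v$ to each $B_1, \dots, B_k$ inside the current haven component, one at a time, each time re-invoking the haven property on the updated set. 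Because each newly added path may be chosen as a shortest path in the residual graph, its length admits a bound coming from the haven order.

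The crucial accounting step is to show that the total number of vertices used after $h$ stages is at most $h^{3/2} n^{1/2}$, so the haven assumption is never violated. The natural budget splits as $O(h)$ connector paths per stage, i.e.\ $O(h^2)$ paths in total, each of length $O(n^{1/2})$; combined with branch sets whose total size is at most $n$, one needs the average path length to be $O(n^{1/2}/h^{1/2})$. The mechanism that enforces this is that if many of the paths were long, say if removing the last stage's connectors left a component of size $\le \tfrac{2n}{3}$, then those very connectors would form a small separator, contradicting the hypothesis. Running a potential-function argument on the residual haven order after each insertion makes this rigorous and gives the desired $h^{3/2} n^{1/2}$ bound.

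The main obstacle, and the technical heart of the argument, is precisely this accounting: one has to couple the choice of shortest paths with the decreasing haven order to ensure the $h^{3/2}$ dependence rather than a weaker polynomial in $h$. Once an $h^{3/2} n^{1/2}$ budget is established, the construction terminates with $h$ pairwise connected branch sets, yielding a $K_h$-minor and the desired contradiction. (The statement for planar graphs, where $h = 5$, recovers the Lipton--Tarjan $O(n^{1/2})$ separator of \cite{LT79}, and indeed the proof sketched here is a direct generalization of the Lipton--Tarjan approach, with the haven/minor dichotomy replacing the BFS-layer argument specific to planarity.)
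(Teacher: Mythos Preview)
The paper does not prove Theorem~\ref{AST}; it is quoted without proof as the separator theorem of Alon, Seymour and Thomas \cite{AST90} and then applied as a black box in the proof of Theorem~\ref{forbid}. So there is no ``paper's own proof'' to compare your proposal against.

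As for the proposal itself, the overall strategy---assume no small separator exists and directly build a $K_h$-minor from the resulting large-component structure---is indeed the shape of the actual Alon--Seymour--Thomas argument. But your sketch has real gaps. First, the monotonicity you claim for $\beta$ is false as stated: if $T \subseteq T'$ then the large component $\beta(T')$ of $G - T'$ need not be contained in $\beta(T)$; what one gets is only that $\beta(T')$ \emph{touches} $\beta(T)$ (they intersect or are joined by an edge), which is the haven axiom, and your construction must be rewritten to use only that weaker property. Second, and more seriously, your accounting does not close. You budget $O(h^2)$ connector paths and say each has length $O(n^{1/2})$, which gives $O(h^2 n^{1/2})$, not $h^{3/2} n^{1/2}$; you then assert that a ``potential-function argument'' forces average length $O((n/h)^{1/2})$, but no such argument is given, and the one-line justification (``those very connectors would form a small separator'') does not by itself yield the required bound. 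In the actual \cite{AST90} proof the $h^{3/2}$ exponent comes from a careful lemma showing that one can find a single connected set of size at most roughly $\sqrt{hn}$ adjacent to all previously built branch sets simultaneously, rather than $h$ separate paths; getting this right is precisely the content of the theorem, and your sketch has not supplied it.
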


We are now ready to prove our main result about $\mathcal{G}_H$, which we recall from the introduction.

\begin{theorem} \label{forbid}
For every fixed graph $H$, the class $\mathcal{G}_H$ of graphs $G$ which do not contain an $H$-minor is $s$-good for all $s \geq 3$.
\end{theorem}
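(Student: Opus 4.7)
The plan is to deduce Theorem~\ref{forbid} from the Nikiforov--Rousseau criterion (Theorem~\ref{NikRou}). Writing $h = |V(H)|$, the first observation is that $\mathcal{G}_H \subseteq \mathcal{G}_{K_h}$: since $H$ is a subgraph of $K_h$, any $K_h$-minor of $G$ yields an $H$-minor after deleting edges. It therefore suffices to verify that $\mathcal{G}_{K_h}$ satisfies the hypotheses of Theorem~\ref{NikRou}.

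Since $\mathcal{G}_{K_h}$ is closed under induced subgraphs, Theorem~\ref{KT} applied to each induced subgraph of a graph $G \in \mathcal{G}_{K_h}$ produces a vertex of degree less than $c h \sqrt{\log h}$, so every $G \in \mathcal{G}_{K_h}$ is $d$-degenerate for the constant $d = \lceil c h \sqrt{\log h}\rceil$. I would then fix any $\gamma < 1/2$ (say $\gamma = 1/4$) and let $\eta > 0$ be the constant produced by Theorem~\ref{NikRou} applied with this $s$, $d$, and $\gamma$. It remains to show that every sufficiently large $G \in \mathcal{G}_{K_h}$ on $n$ vertices admits an $(n^{1-\gamma}, \eta)$-separator.

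Theorem~\ref{AST} directly yields only an $(h^{3/2} n^{1/2}, 2/3)$-separator, and the ratio $2/3$ is typically much larger than $\eta$. To sharpen the component-ratio, I would recursively apply Theorem~\ref{AST} to each component of size greater than $\eta n$ that remains after removing the current separator, adding each secondary separator to the first. After $i$ rounds, every remaining piece has size at most $(2/3)^i n$, so the recursion terminates after $k = O(\log(1/\eta))$ rounds. Since the pieces that still need to be split at any given round are disjoint subsets of $V(G)$ each of size exceeding $\eta n$, there are at most $1/\eta$ of them, and the separator added at round $i$ has total size at most $(1/\eta) \cdot h^{3/2} ((2/3)^{i-1} n)^{1/2}$. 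Summing this geometric series produces a total separator of size $O_{h,\eta}(n^{1/2})$, which is at most $n^{1-\gamma} = n^{3/4}$ once $n$ is sufficiently large in terms of $h$ and $\eta$. With the bounded degeneracy and the required separator now in hand, Theorem~\ref{NikRou} immediately gives $s$-goodness. The only nontrivial step is this recursive reduction of the component-ratio from $2/3$ to $\eta$; otherwise the proof is simply the assembly of the three quoted theorems.
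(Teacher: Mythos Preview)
Your proposal is correct and follows essentially the same route as the paper: verify bounded degeneracy via Theorem~\ref{KT}, then iterate Theorem~\ref{AST} to shrink the component ratio from $2/3$ down to the $\eta$ required by Theorem~\ref{NikRou}. The only cosmetic differences are that the paper splits \emph{every} piece at each stage (tracking $2^i$ pieces with total separator size $2^i h^{3/2} n^{1/2}$) rather than only the large ones, and it takes $\gamma = 1/3$ instead of your $\gamma = 1/4$; neither affects the argument.
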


\begin{proof}
In order to apply Theorem \ref{NikRou}, it is enough to verify that, for any $\eta > 0$, any sufficiently large graph $G$ in $\mathcal{G}_H$ has bounded degeneracy and a $(|V(G)|^{2/3}, \eta)$-separator.

Suppose that the graph $H$ has $h$ vertices. Note, by Theorem \ref{KT}, that any graph with average degree at least $c h \sqrt{\log h}$ contains a $K_h$-minor and, hence, an $H$-minor. This implies that the average degree of every subgraph of $G$ is at most $c h \sqrt{\log h}$. In turn, this easily implies that the degeneracy of $G$ is at most $c h \sqrt{\log h}$.

Suppose that $G$ has $n$ vertices. To show that a sufficiently large graph $G$ from the class $\mathcal{G}_H$ contains an $(n^{2/3}, \eta)$-separator $T$, we begin by applying Theorem \ref{AST} to conclude that there is an $(h^{3/2} n^{1/2}, \frac{2}{3})$-separator. Note, by taking unions of small components if necessary, that this gives a decomposition of the vertex set of $G$ into three sets $T, A$ and $B$ such that $|T| \leq h^{3/2}n^{1/2}$, $|A|, |B| \leq \frac{2}{3} n$ and there are no edges between $A$ and $B$.

We will prove, by induction on $i$, that for $n$ sufficiently large depending on $i$, there is a separator $T_i$ of size at most $2^i h^{3/2} n^{1/2}$ that splits the vertex set of $G$ into $2^i$ sets $U_{i, 1}, \dots, U_{i, 2^i}$, each of size at most $\left(\frac{2}{3}\right)^i n$, so that there are no edges between any distinct sets $U_{i,a}$ and $U_{i,b}$. By the previous paragraph, the result holds for $i = 1$. Now suppose that it holds for $i$. We will show that a similar conclusion follows for $i+1$.

To begin, we apply Theorem \ref{AST} within each of the sets $U_{i,j}$ to conclude that there is a decomposition of $U_{i,j}$ into sets $T_{i,j}, A_{i,j}$ and $B_{i,j}$ such that $|T_{i,j}| \leq h^{3/2} |U_{i,j}|^{1/2}$, 
\[|A_{i,j}|, |B_{i,j}| \leq \frac{2}{3} |U_{i,j}| \leq \left(\frac{2}{3}\right)^{i+1} n\] 
and there are no edges between $A_{i,j}$ and $B_{i,j}$. 
We let the collection $\{U_{i+1,j}\}_{j=1}^{2^{i+1}}$ consist of all sets of the form
$A_{i,j}$ and $B_{i,j}$ for $j=1,\ldots, 2^i$.
This collection has size $2^{i+1}$, each of the sets has size at most $\left(\frac{2}{3}\right)^{i+1} n$ and there are no edges between distinct sets $U_{i+1,a}$ and $U_{i+1, b}$. If we also let $T_{i+1}$ be the union of $T_i$ and $T_{i,j}$, for $1 \leq j \leq 2^i$, we have
\[|T_{i+1}| \leq |T_i| + \sum_{j=1}^{2^i} |T_{i,j}| \leq 2^i h^{3/2} n^{1/2} + \sum_{j=1}^{2^i} h^{3/2} |U_{i,j}|^{1/2} \leq  2^i h^{3/2} n^{1/2} + 2^i h^{3/2} n^{1/2} =  2^{i+1} h^{3/2} n^{1/2}.\]
Therefore, the induction holds.

If we now apply this result with $i = 2 \log \eta^{-1}$ and $n \geq h^9\eta^{-12}$, we see, since $\left(\frac{2}{3}\right)^i \leq \eta$ and $n^{2/3} \geq 2^i h^{3/2} n^{1/2}$, that $G$ has an $(n^{2/3}, \eta)$-separator, as required. The result follows. 
\end{proof}

Let $\mathcal{K}$ be the collection of graphs $K$ for which there is a proper vertex coloring in $\chi(K)$ colors such that at least two of the color classes have size one. The full result of Nikiforov and Rousseau (namely, Theorem 2.2 of \cite{NR09}) says that for any $K \in \mathcal{K}$, $d \geq 1$ and $0 < \gamma < 1$ there exists $\eta > 0$ such that the class $\mathcal{G}$ of $d$-degenerate graphs $G$ with a $(|V(G)|^{1-\gamma}, \eta)$-separator is $K$-good. This may in turn be used to show that for any $H$ the family of graphs $\mathcal{G}_H$ is $K$-good for all $K \in \mathcal{K}$. However, $\mathcal{G}_H$ is not $K$-good for all graphs $K$. This follows from the observation mentioned in the introduction that $K_{1,t}$ is not $K_{2,2}$-good for any $t$.

%We conjecture that the same should be true for any graph $K$ and not just those in the special class $\mathcal{K}$.

The family of graphs $\mathcal{G}_H$ with forbidden $H$-minor is not the only class of graphs known to have small separators. For example, several geometric separator theorems are known (see, for example, \cite{FP10, MTTV97}) saying that the class of intersection graphs formed by certain collections of bodies have small separators. In any of these cases, Theorem \ref{NikRou} will also allow us to show that the classes are $K$-good for any $K \in \mathcal{K}$, though in some cases we may have to further restrict the class in order to obtain the required degeneracy condition.

\section{Concluding remarks}

The question of determining whether the cube is $s$-good for any $s \geq 3$ is only one of two well-known questions of Burr and Erd\H{o}s regarding Ramsey numbers and the cube. The other \cite{BE75, CG98} is the question of determining whether the Ramsey number $r(Q_n) := r(Q_n, Q_n)$ of the 
cube with itself is linear in the number of vertices in $Q_n$.

Beginning with Beck \cite{B83}, who proved that $r(Q_n) \leq 2^{c n^2}$, a large number of papers \cite{GRR00, GRR01, KR01, S01, S07} have considered this question, with the current best bound being $r(Q_n) \leq n 2^{2n + 5}$, due to Fox and Sudakov \cite{FS07}. That is, $r(Q_n) \leq |Q_n|^{2 + o(1)}$. This (almost) quadratic bound for $r(Q_n)$ was obtained using a careful application of dependent random choice \cite{FS11}. It seems likely that in order to improve the bound one will have to improve this latter technique. However, we do not rule out the possibility that some of the embedding lemmas used in the current paper could also be of use.

Suppose $H$ is an $N$-vertex graph with chromatic number $r$, maximum degree $O(1)$ and bandwidth $o(N)$. Bollob\'as and Koml\'os conjectured that every $N$-vertex graph $G$ which does not contain $H$ as a subgraph has minimum degree at most $(1-\frac{1}{r}+o(1))N$. This conjecture was recently verified by B\"ottcher, Schacht and Taraz \cite{BST09}. It is natural to wonder to what extent the bounded maximum degree condition on $H$ can be relaxed. In particular, is it true that every $Q_n$-free graph on $2^n$ vertices has minimum degree at most $(\frac{1}{2}+o(1))2^n$? Note that $Q_n$ is a bipartite graph on $N=2^n$ vertices, which is $n$-regular with $n=\log N$, and has bandwidth $O\left(N/\sqrt{\log N}\right)$. It appears that new techniques would have to be developed to handle this problem. The proof of the Bollob\'as-Koml\'os conjecture uses the regularity lemma and the blow-up lemma, giving quantitative estimates that are too weak to embed spanning subgraphs which are as dense as cubes. A positive solution would likely lead to a proof of the Burr-Erd\H{o}s conjecture discussed above that cubes have linear Ramsey number. 

%It would be of great interest to know whether the methods of this paper could be improved to give an approximate result of the form $r(Q_n, K_s) \leq (s - 1 + \epsilon) 2^n$, even in the case of triangles. Such a result would likely be a necessary first step in resolving the original question of Burr and Erd\H{o}s.

%We conjecture that for each odd cycle $C_{2k+1}$, the family of cubes is $C_{2k+1}$-good. In the case $k=1$, this is simply the special case of the Burr-Erd\H{o}s conjecture that the family of cubes is $3$-good. We think the problem might be more tractable for longer odd cycles and could be a step towards proving that the family of cubes is $3$-good. 

\medskip

\noindent \textbf{Acknowledgements}. We would like to thank the
two anonymous referees for their valuable comments.

\medskip 

\noindent \textbf{Note added in proof.} Recently, the main conjecture studied in this paper, that $r(Q_n, K_s) = (s-1)(2^n-1) + 1$ for $s$ fixed and $n$ sufficiently large, was resolved by Fiz Pontiveros, Griffiths, Morris, Saxton and Skokan \cite{FGMSS13, FGMSS132}. Their proof builds upon the techniques developed in this paper.

\end{document}